\def\subsection{\@startsection{subsection}{3}
  \z@{.5\linespacing\@plus.7\linespacing}{.1\linespacing}
  {\normalfont\itshape}}
\newtheorem{theorem}{Theorem}[section]
\newtheorem{lemma}[theorem]{Lemma}
\newtheorem{proposition}[theorem]{Proposition}
\theoremstyle{definition}
\newtheorem*{remark}{Remark}
\newtheorem*{remarks}{Remarks}
\numberwithin{equation}{section}
\mathchardef\hyphen="2D
\def\@tvsp{\mathchoice{{}\mkern-4.5mu}{{}\mkern-4.5mu}{{}\mkern-2.5mu}{}}
\def\ln{\left|\@tvsp\left|\@tvsp\left|}
\def\rn{\right|\@tvsp\right|\@tvsp\right|}
\begin{document}
\title{Sample covariance matrices converge to compound free Poisson distribution}
\author{March T.~Boedihardjo}
\address{Department of Mathematics, Texas A\&M University, College Station, Texas 77843}
\email{march@math.tamu.edu}
\keywords{}
\subjclass[2010]{}
\begin{abstract}
We show that the empirical distribution of the eigenvalues of the sample covariance matrix of certain random vectors (not necessarily independent entries) with bounded
marginal $L^{4}$ norms converges weakly to a compound free Poisson distribution.
\end{abstract}
\maketitle
\section{Main result}
Marchenko and Pastur \cite{Marchenko} showed that the empirical distribution of the eigenvalues of the sample covariance matrix of a random vector uniformly distributed on
the unit sphere converges weakly to the Marchenko-Pastur law. There has been many generalizations to general random vectors (see \cite{Adamczak}). The main result of this
paper is
\begin{theorem}\label{11}
Suppose that $f_{1},\ldots,f_{N}$ are independent random vectors on $\mathbb{C}^{n}$ such that
\[\sup_{x\in S^{n-1}}\mathbb{E}|(f_{j},x)|^{4}\leq\frac{L}{n^{2}}\text{ and }\mathbb{E}\|f_{j}\|^{k}\leq L_{k},\quad j=1,\ldots,N,\;k\geq 1\]
for some $L>0$ and $L_{k}>0$, $k\geq 1$ independent of $n$ and $N$. If $n,N\to\infty$ in such a way that $\frac{n}{N}\to\lambda\in(0,\infty)$ and
\[\left\|\sum_{j=1}^{N}\mathbb{E}\|f_{j}\|^{2(k-1)}f_{j}\otimes f_{j}-a_{k}I\right\|\leq Cn^{-\epsilon_{0}},\quad k\geq 1,\]
for some $a_{k}\in\mathbb{C}$, $k\geq 1$ and $C,\epsilon_{0}>0$ independent of $n$ and $N$, then
\[\mathbb{E}\circ\mathrm{tr}(f_{1}\otimes f_{1}+\ldots+f_{N}\otimes f_{N})^{p}\to\sum_{\pi\in\mathrm{NC}(p)}\prod_{B\in\pi}a_{|B|}.\]
\end{theorem}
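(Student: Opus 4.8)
The plan is a moment computation. Write $T_j=f_j\otimes f_j$ for the corresponding rank-one operator on $\mathbb{C}^n$, so that $S:=f_1\otimes f_1+\cdots+f_N\otimes f_N$ satisfies $S^p=\sum_{\vec j}T_{j_1}\cdots T_{j_p}$, the sum over $\vec j=(j_1,\dots,j_p)\in\{1,\dots,N\}^p$. Since each $T_{j_1}\cdots T_{j_p}$ is again rank one, $\mathrm{Tr}(T_{j_1}\cdots T_{j_p})=\prod_{i=1}^p(f_{j_{i+1}},f_{j_i})$ (with $j_{p+1}:=j_1$), hence
\[\mathbb{E}\circ\mathrm{tr}(S^p)=\frac1n\sum_{\vec j}\mathbb{E}\prod_{i=1}^p(f_{j_{i+1}},f_{j_i}).\]
I would group the terms by the partition $\sigma=\ker\vec j$ of $\{1,\dots,p\}$ recording which coordinates of $\vec j$ coincide, writing $\mathbb{E}\circ\mathrm{tr}(S^p)=\tfrac1n\sum_\sigma C_\sigma$, where $C_\sigma$ sums over $\vec j$ with $\ker\vec j=\sigma$. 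The theorem then reduces to two claims: $\tfrac1nC_\sigma\to\prod_{B\in\sigma}a_{|B|}$ when $\sigma\in\mathrm{NC}(p)$, and $\tfrac1nC_\sigma\to0$ when $\sigma$ is crossing; summing the finitely many limits gives the result.

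For $\sigma\in\mathrm{NC}(p)$ I would argue by induction on $p$, using that a non-crossing partition always contains an interval block $B=\{s,s+1,\dots,s+m-1\}$. Fixing the (distinct) indices of the other blocks and integrating out $f_{j_B}$ first, the $m-1$ factors internal to $B$ collapse to $\|f_{j_B}\|^{2(m-1)}$, while the two boundary factors contain $f_{j_B}$ once and $\overline{f_{j_B}}$ once; thus $\mathbb{E}_{f_{j_B}}$ of the $f_{j_B}$-part is a bilinear form of $\mathbb{E}\big(\|f_{j_B}\|^{2(m-1)}f_{j_B}\otimes f_{j_B}\big)$ evaluated on the vectors of the two cyclic neighbours of $B$, and summing over $j_B$ replaces this by a bilinear form of $\sum_j\mathbb{E}\big(\|f_j\|^{2(m-1)}f_j\otimes f_j\big)$, which by hypothesis equals $a_mI$ up to operator norm $Cn^{-\epsilon_0}$. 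Substituting $a_mI$ deletes $B$, multiplies by $a_{|B|}$, and splices its two neighbours, leaving exactly the analogous sum for $\sigma\setminus\{B\}$ on a cycle of length $p-m$; the base case (one block) is $\sum_j\mathbb{E}\|f_j\|^{2p}=a_pn+o(n)$, which is the $k=p$ hypothesis after taking a trace. Each substitution costs at most $Cn^{-\epsilon_0}$ times a quantity of trace-norm $O(n)$ (every operator involved has operator norm $O(1)$ and the ambient space is $\mathbb{C}^n$), and there are at most $p$ of them, so the accumulated error is $O(n^{1-\epsilon_0})=o(n)$; the corrections arising from the distinctness of the block indices are of strictly lower order and treated the same way.

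For crossing $\sigma$ the same reduction is performed on its interval blocks (an interval block never takes part in a crossing, so crossings are preserved), leaving a nonempty "core" on at least two blocks, none of them an interval; every block of the core is incident to at least four of the cross factors $(f_{j_{i+1}},f_{j_i})$, and integrating such a block out, applying Cauchy--Schwarz (or H\"older, to absorb the accompanying powers of $\|f_{j_B}\|$), gains a factor $n^{-2+\delta}$. The elementary input here is
\[\sup_{x\in S^{n-1}}\mathbb{E}\,|(f_j,x)|^{2r}\le C_{r,\delta}\,n^{-2+\delta}\qquad(r\ge2,\ \delta>0),\]
which follows from the hypotheses $\sup_x\mathbb{E}|(f_j,x)|^4\le Ln^{-2}$ and $\mathbb{E}\|f_j\|^k\le L_k$ by log-convexity of $t\mapsto\mathbb{E}|(f_j,x)|^t$ (the case $r=2$ is the hypothesis itself). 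Combining the gains, a crossing $\sigma$ contributes at most $C\,N^{|\sigma|}$ times a product of enough factors $n^{-2+\delta}$ that, since $N=O(n)$ and $|\sigma|\le p$ is bounded, the total is $o(n)$; this is exactly where crossing partitions lose a power of $n$ compared with the non-crossing ones. Heuristically this is a genus expansion in which only the planar, i.e.\ non-crossing, configurations survive at order $n$.

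The step I expect to be the main obstacle is the combinatorial and error bookkeeping of the last two parts: organizing the ``eliminate one block at a time'' recursion so that the operator-norm hypothesis is applied at each stage to precisely the right operator, checking uniformly in $\sigma$ (and in the elimination order) that a crossing core really does supply enough surplus inner-product factors, and keeping all three sources of error --- the $n^{-\epsilon_0}$ from the hypothesis, the corrections for coinciding block indices, and the $n^\delta$ slack in the moment bound --- comfortably $o(n)$ after the (finite) sum over partitions. None of the individual estimates is deep; assembling them without loss is the delicate part.
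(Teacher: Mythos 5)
Your overall strategy matches the paper's: expand $\mathbb{E}\circ\mathrm{tr}(S^p)$ over $j:\{1,\dots,p\}\to\{1,\dots,N\}$, split according to whether $\ker j$ is noncrossing or crossing, prove the noncrossing contributions converge to $\prod_{B\in\pi}a_{|B|}$ by peeling an interval block at a time using the hypothesis on $\sum_j\mathbb{E}\|f_j\|^{2(k-1)}f_j\otimes f_j$, and show the crossing contributions vanish by integrating out blocks one at a time and using a moment bound of the form $\sup_x\mathbb{E}|(f,x)|^{2r}\lesssim n^{-2+\delta}$.

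The genuine gap is in the crossing estimate, and it is exactly the ``combinatorial bookkeeping'' you flag as a likely obstacle---but it is not a bookkeeping detail, it is the main technical content of the proof (the paper devotes its entire Section 2 to it). The claim that \emph{each} block of the core, when integrated out, gains a factor $n^{-2+\delta}$ is false. When you condition on a block $B_1$ and apply H\"older, you consume every inner-product factor incident to $B_1$. A later block $B_2$ that shares factors with $B_1$ then has fewer than four \emph{fresh} factors remaining (possibly $0$, $1$, $2$, or $3$), and the conditional expectation there produces only $n^{-\frac12\min(r,4)(1-\epsilon)}$ where $r$ is the fresh count, not $n^{-2+\delta}$. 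A priori the worst elimination order could leave some blocks with no fresh factors at all, and it is not obvious the total gain suffices. The paper resolves this with a graph inequality (Lemma~\ref{21}): if one orders the blocks (vertices of the ``cross-factor graph'') in increasing order of degree and peels in that order, then with $t=4$ and minimum degree $\ge 4$, $\sum_j\min(4,|S_j\setminus(S_1\cup\dots\cup S_{j-1})|)\ge 2|V|$, so the total gain is $n^{-|V|(1-\epsilon)}$, just enough to beat the $N^{|V|}$ choices of block indices after dividing by $n$ from the normalized trace. Without an argument of this type, your claimed product of $n^{-2+\delta}$ factors is unsubstantiated. A secondary, smaller gap: in the noncrossing induction you assert the operators $\sum_{j}\mathbb{E}(f_{j(q+1)}\otimes f_{j(q+1)})\cdots(f_{j(p)}\otimes f_{j(p)})$ have operator norm $O(1)$ (up to $n^{\epsilon}$); this is the paper's Lemma~\ref{BoundExp2}, which is itself proved by the same peeling device and is not automatic.
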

Notation: tr means normalized trace. $\mathrm{NC}(p)$ is the set of all noncrossing partitions on $\{1,\ldots,p\}$.
\begin{remarks}
1. An immediate consequence of Theorem \ref{11} is that the theorem of Marchenko and Pastur still holds if the random vector is distributed (but not uniformly distributed)
on the unit sphere provided that it has bounded marginal $L^{4}$ norms.\\[\baselineskip]
2. The condition $\displaystyle\sup_{x\in S^{n-1}}\mathbb{E}|(f_{i},x)|^{4}\leq\frac{L}{n^{2}}$ cannot be removed from Theorem \ref{11}. For example, when $N=n$ and each
$f_{i}$ is uniformly distributed on the canonical basis $\{e_{i}\}_{i=1}^{n}$ for $\mathbb{C}^{n}$, we have $a_{k}=1$ and
\[\mathbb{E}\circ\mathrm{tr}(f_{1}\otimes f_{1}+\ldots+f_{n}\otimes f_{n})^{p}\to B_{p},\]
where $B_{p}$ is the Bell number, the number of partitions on $\{1,\ldots,p\}$.
\end{remarks}
\section{A graph inequality}
This section is devoted to proving the following lemma.
\begin{lemma}\label{21}
Let $S_{1},\ldots,S_{r}$ be subsets of a set $E$ such that every element $e\in E$ is contained in exactly two of the sets $S_{1},\ldots,S_{r}$. Assume that $|S_{1}|\leq
\ldots\leq|S_{r}|$. Let $t\geq 0$. Then
\[\min(t,|S_{1}|)+\min(t,|S_{2}\backslash S_{1}|)+\ldots+\min(t,|S_{r}\backslash(S_{1}\cup\ldots\cup S_{r-1})|)\geq\frac{\min(t,|S_{1}|)}{2}r.\]
\end{lemma}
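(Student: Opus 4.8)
\noindent The plan is to recast the statement in the language of multigraphs. Regard $1,\dots,r$ as vertices and each element $e\in E$ as an edge joining the unique two sets containing it; this produces a multigraph $G$ on $\{1,\dots,r\}$ with $|S_i|=\deg_G(i)=:d_i$, so that the sorting hypothesis reads $d_1\le\dots\le d_r$. Orient every edge from its lower-indexed endpoint to its higher-indexed one. An element of $S_i$ fails to lie in $S_1\cup\dots\cup S_{i-1}$ precisely when its partner set has index larger than $i$, so $|S_i\setminus(S_1\cup\dots\cup S_{i-1})|$ is exactly the out-degree $a_i$ of vertex $i$. Put $b_i:=d_i-a_i$ (the in-degree) and $m:=\min(t,d_1)$; if $d_1=0$ the claimed inequality is trivial, so assume $d_1\ge 1$. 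The goal becomes $\sum_{i=1}^r\min(t,a_i)\ge\frac m2 r$.

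First I would settle the ``easy half''. The map $u\mapsto\min(t,u)$ is concave on $[0,\infty)$ and vanishes at $0$, hence subadditive, so for each $i$
\[\min(t,a_i)+\min(t,b_i)\ \ge\ \min(t,a_i+b_i)\ =\ \min(t,d_i)\ \ge\ \min(t,d_1)\ =\ m.\]
Summing over $i$ gives $\sum_i\min(t,a_i)+\sum_i\min(t,b_i)\ge mr$, so it is enough to prove the comparison $\sum_i\min(t,a_i)\ge\sum_i\min(t,b_i)$; indeed that yields $2\sum_i\min(t,a_i)\ge mr$. (The bound is sharp: for $r=2$ one necessarily has $S_1=S_2=E$, i.e.\ $a=(|E|,0)$ and $b=(0,|E|)$, and every inequality above becomes an equality.)

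The comparison $\sum_i\min(t,a_i)\ge\sum_i\min(t,b_i)$ is the heart of the matter. Since $(a_i)$ and $(b_i)$ are nonnegative integer sequences with the common sum $|E|$, Karamata's inequality for the concave function $\min(t,\cdot)$ reduces this to showing that the in-degree sequence $(b_i)$ \emph{majorizes} the out-degree sequence $(a_i)$: for every $k$, the $k$ largest $b_i$ should sum to at least the $k$ largest $a_i$. The guiding intuition is that every edge points toward the endpoint of larger-or-equal degree, so out-degree is dispersed among the low-degree vertices while in-degree accumulates on the high-degree ones. To verify the $k$-th majorization inequality I would start from an arbitrary set $T$ of $k$ vertices: the edges with tail in $T$, of which there are $\sum_{i\in T}a_i$, have their heads inside some vertex set $H$, and since $\sum_{i\in H}b_i$ counts \emph{all} edges with head in $H$ we get $\sum_{i\in H}b_i\ge\sum_{i\in T}a_i$; if $|H|\le k$, enlarging $H$ to size $k$ finishes the case.

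The remaining and genuinely delicate case is when the out-edges of $T$ fan out to more than $k$ distinct higher-indexed vertices. Here the degree ordering $d_1\le\dots\le d_r$ must really be used; I would combine it with the prefix inequality $\sum_{i\le k}a_i\ge\sum_{i\le k}b_i$ — which holds because an edge with head $\le k$ has both endpoints $\le k$, hence tail $\le k$ — together with an exchange (Hall-type) argument to locate a $k$-element witness set with large in-degree sum. Making this final passage airtight, i.e.\ deducing full majorization from the prefix inequality and the degree ordering, is the step I expect to be the main obstacle; by contrast the reformulation and the concavity estimate above are routine.
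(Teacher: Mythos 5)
Your reformulation as a multigraph oriented by index, with $a_i:=|S_i\setminus(S_1\cup\cdots\cup S_{i-1})|$ the out-degree and $b_i:=d_i-a_i$ the in-degree, is correct, and so is the subadditivity estimate $\min(t,a_i)+\min(t,b_i)\ge\min(t,d_i)\ge m$. But the pivotal reduction --- that it suffices to show $\sum_i\min(t,a_i)\ge\sum_i\min(t,b_i)$, which you hope to deduce from $(b_i)$ majorizing $(a_i)$ --- is not merely delicate; it is false. Take $r=5$ and the multigraph on $\{1,\ldots,5\}$ with edges $\{1,2\}$, $\{1,3\}$, $\{1,4\}$, $\{1,5\}$, two copies of $\{2,4\}$, one $\{2,5\}$, one $\{3,4\}$, and two copies of $\{3,5\}$; equivalently $E=\{e_1,\ldots,e_{10}\}$ with $S_1=\{e_1,e_2,e_3,e_4\}$, $S_2=\{e_1,e_5,e_6,e_7\}$, $S_3=\{e_2,e_8,e_9,e_{10}\}$, $S_4=\{e_3,e_5,e_6,e_8\}$, $S_5=\{e_4,e_7,e_9,e_{10}\}$. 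Every $e_i$ lies in exactly two $S_k$ and $|S_1|=\cdots=|S_5|=4$. One computes $a=(4,3,3,0,0)$ and $b=(0,1,1,4,4)$, so at $t=1$ one has $\sum_i\min(1,a_i)=3<4=\sum_i\min(1,b_i)$; the comparison fails, and majorization already fails at level $3$ since $4+3+3=10>9=4+4+1$. The lemma itself holds here ($3\ge\tfrac{\min(1,4)}{2}\cdot 5=\tfrac52$), so the breakdown is in your reduction, not the target. The intuition that out-degree is ``dispersed'' and in-degree ``concentrated'' describes where the mass sits on the vertex set, but not the shape of the two value distributions, which is what majorization measures; after sorting, $a^{\downarrow}=(4,3,3,0,0)$ is strictly more top-heavy than $b^{\downarrow}=(4,4,1,1,0)$.

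The paper does something genuinely different and weaker. Its Lemmas \ref{22}--\ref{25} establish, for \emph{every} subset $\Lambda\subset\{1,\ldots,r\}$, the one-sided bound $\sum_{k\in\Lambda}a_k\ge\tfrac12|S_1|\bigl(|\Lambda|-|\Lambda^c|\bigr)$, using the double-counting identity $\sum_k a_k=\tfrac12\sum_k|S_k|$, a tail estimate on $\sum_{k\ge k_0}a_k$ coming from the exactly-two-cover hypothesis, and an index-matching Lemma \ref{24} that exploits the sorted sizes. Lemma \ref{21} then follows by taking $\Lambda=\{k:a_k\le t\}$, rewriting the left side as $\sum_{k\in\Lambda}a_k+t|\Lambda^c|$, and splitting on whether $|\Lambda|\ge r/2$ or $|\Lambda^c|\ge r/2$. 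In effect the paper replaces your symmetric $a$-versus-$b$ comparison with a lower bound on partial sums of $a$ alone; that is the statement that is actually true, and your route cannot be repaired without retreating to something of this weaker type.
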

\begin{lemma}\label{22}
Let $S_{1},\ldots,S_{r}$ be subsets of a set $E$ such that every element $x\in E$ is contained in exactly two of the sets $S_{1},\ldots,S_{r}$. Then
\[|E|=\frac{1}{2}\sum_{k=1}^{r}|S_{k}|.\]
\end{lemma}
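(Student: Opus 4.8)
The plan is to prove this by a standard double-counting argument applied to the incidence relation between the elements of $E$ and the index set $\{1,\ldots,r\}$. Concretely, I would consider the cardinality
\[
\#\{(x,k)\ :\ x\in E,\ 1\le k\le r,\ x\in S_{k}\}
\]
and compute it in two ways. This is exactly the kind of "count the same thing twice" identity that underlies Lemma \ref{21} as well, so it fits the section.

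First, fix the index $k$ and count the $x$'s: the number of $x\in E$ with $x\in S_{k}$ is by definition $|S_{k}|$, so summing over $k$ the above count equals $\sum_{k=1}^{r}|S_{k}|$. Second, fix $x\in E$ and count the $k$'s: by hypothesis every $x\in E$ is contained in exactly two of the sets $S_{1},\ldots,S_{r}$, so this inner count is $2$ for every $x$, and summing over $x\in E$ gives $2|E|$. Equating the two evaluations yields $\sum_{k=1}^{r}|S_{k}|=2|E|$, i.e.\ $|E|=\frac{1}{2}\sum_{k=1}^{r}|S_{k}|$. If one prefers, the same computation can be written with indicator functions and a trivial interchange of finite sums: $\sum_{k=1}^{r}|S_{k}|=\sum_{k=1}^{r}\sum_{x\in E}\mathbf{1}_{S_{k}}(x)=\sum_{x\in E}\sum_{k=1}^{r}\mathbf{1}_{S_{k}}(x)=\sum_{x\in E}2=2|E|$.

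I do not anticipate any real obstacle. The only points to keep in mind are that the hypothesis "exactly two" must be used at full strength (with "at most two" one would get only $|E|\ge\frac12\sum_k|S_k|$, and with "at least two" the reverse inequality), and that $E$ and $r$ should be finite so that all sums make sense — which is implicit in writing $|E|$ and $|S_k|$. Since the resulting identity $|E|=\frac12\sum_{k=1}^{r}|S_{k}|$ is precisely the assertion, the argument is complete as soon as the double count is written down.
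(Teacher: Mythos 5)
Your double-counting argument is correct and is essentially identical to the paper's proof, which also writes $\sum_{k=1}^{r}|S_{k}|=\sum_{k=1}^{r}\sum_{x\in E}I_{S_{k}}(x)=\sum_{x\in E}\sum_{k=1}^{r}I_{S_{k}}(x)=\sum_{x\in E}2=2|E|$. Nothing to add.
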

\begin{proof}
By assumption, $\displaystyle\sum_{k=1}^{r}I_{S_{k}}(x)=2$ for all $x\in E$. So
\[\sum_{k=1}^{r}|S_{k}|=\sum_{k=1}^{r}\sum_{x\in E}I_{S_{k}}(x)=\sum_{x\in E}\sum_{k=1}^{r}I_{S_{k}}(x)=\sum_{x\in E}2=2|E|.\]
\end{proof}
In Lemma \ref{23} and \ref{25} below, $\Lambda^{c}$ is understood as $\{1,\ldots,r\}\backslash\Lambda$. Also when $k=1$, $S_{k}\backslash(S_{1}\cup\ldots\cup S_{k-1})$ is
understood as $S_{1}$.
\begin{lemma}\label{23}
Let $S_{1},\ldots,S_{r}$ be subsets of a set $E$ such that every element $x\in E$ is contained in exactly two of the sets $S_{1},\ldots,S_{r}$. If $\Lambda\subset
\{1,\ldots,r\}$ and $1\leq k_{0}\leq r$, then
\[\sum_{k\in\Lambda}|S_{k}\backslash(S_{1}\cup\ldots\cup S_{k-1})|\geq\frac{1}{2}\left(\sum_{\substack{1\leq k\leq k_{0}-1\\k\in\Lambda}}|S_{k}|-\sum_{\substack{1\leq k
\leq k_{0}-1\\k\in\Lambda^{c}}}|S_{k}|\right).\]
\end{lemma}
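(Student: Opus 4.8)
The plan is to translate both sides of the inequality into counts over the elements of $E$. Since every $x \in E$ belongs to exactly two of $S_1,\ldots,S_r$, assign to each $x$ the pair of indices $a(x) < b(x)$ with $x \in S_{a(x)} \cap S_{b(x)}$. An element $x$ lies in $S_k \setminus (S_1 \cup \cdots \cup S_{k-1})$ exactly when $k = a(x)$, so the left-hand side equals $|\{x \in E : a(x) \in \Lambda\}|$ (the sets $\{x : a(x)=k\}$, $k\in\Lambda$, being pairwise disjoint). For the right-hand side, split $|S_k| = |\{x : a(x) = k\}| + |\{x : b(x) = k\}|$ and introduce signs $\varepsilon_k = 1$ for $k \in \Lambda$ and $\varepsilon_k = -1$ for $k \in \Lambda^c$; then $\sum_{1\le k\le k_0-1,\,k\in\Lambda}|S_k| - \sum_{1\le k\le k_0-1,\,k\in\Lambda^c}|S_k| = \sum_{x\in E}\big(\varepsilon_{a(x)}\mathbf{1}_{\{a(x)\le k_0-1\}} + \varepsilon_{b(x)}\mathbf{1}_{\{b(x)\le k_0-1\}}\big)$.

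With both sides now written as sums over $x$, the inequality follows from the pointwise estimate $\varepsilon_{a(x)}\mathbf{1}_{\{a(x)\le k_0-1\}} + \varepsilon_{b(x)}\mathbf{1}_{\{b(x)\le k_0-1\}} \le 2\,\mathbf{1}_{\{a(x)\in\Lambda\}}$ for each $x$. I would check this by cases. If $a(x) \in \Lambda$ the right side is $2$ and the left side is at most $2$ trivially (each of the two summands lies in $\{-1,0,1\}$). If $a(x) \in \Lambda^c$ the right side is $0$; when $b(x) > k_0 - 1$ the left side is $-\mathbf{1}_{\{a(x)\le k_0-1\}} \le 0$, and when $b(x) \le k_0 - 1$ the inequality $a(x) < b(x)$ forces $a(x) \le k_0 - 1$ as well, so the left side equals $-1 + \varepsilon_{b(x)} \le 0$. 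Summing over $x \in E$ and dividing by $2$ gives the lemma; the case $k_0 = 1$ is immediate since the right-hand side is then an empty sum.

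The genuine (if minor) point is this last case: the factor $\tfrac12$ is lost if one estimates $|S_k|$ by crude counting (e.g.\ via Lemma \ref{22}) instead of exploiting the ordering $a(x) < b(x)$, which guarantees that the $-1$ coming from an index $a(x) \in \Lambda^c$ is always available to absorb the possible $+1$ contributed by $b(x)$. Everything else is bookkeeping: fixing an orientation on each two-element "edge" $\{a(x),b(x)\}$ and keeping track of the direction of the inequality.
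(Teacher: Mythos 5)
Your proof is correct, and it takes a genuinely different route from the paper's. The paper proceeds by set algebra: it writes $\sum_{k\in\Lambda}|S_k\setminus(\cdots)|$ as $|E|-\sum_{k\in\Lambda^c}|S_k\setminus(\cdots)|$, invokes Lemma~\ref{22} to turn $|E|$ into $\frac12\sum_k|S_k|$, and then grinds through a chain of identities and one-sided bounds (culminating in the auxiliary inequality~\eqref{21e}, which needs a separate argument using the ``exactly two'' hypothesis twice, once for at-least-two and once for at-most-two). Your approach instead encodes the hypothesis directly: assign to each $x\in E$ the ordered pair $a(x)<b(x)$ of indices of the two sets containing it, observe that $x\in S_k\setminus(S_1\cup\cdots\cup S_{k-1})$ iff $k=a(x)$, and reduce the whole inequality to the pointwise estimate
\[
\varepsilon_{a(x)}\mathbf{1}_{\{a(x)\le k_0-1\}}+\varepsilon_{b(x)}\mathbf{1}_{\{b(x)\le k_0-1\}}\le 2\,\mathbf{1}_{\{a(x)\in\Lambda\}},
\]
which you verify by an easy case split. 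I checked the cases: when $a(x)\in\Lambda$ each summand is at most~$1$; when $a(x)\in\Lambda^c$ and $b(x)>k_0-1$ the left side is $\le 0$; when $a(x)\in\Lambda^c$ and $b(x)\le k_0-1$ the ordering $a(x)<b(x)$ forces $a(x)\le k_0-1$ so the left side is $-1+\varepsilon_{b(x)}\le 0$. Summing over $E$ and halving gives the lemma. What your version buys: it makes the combinatorial content (a multigraph with edges $E$ and vertices $\{1,\dots,r\}$, each edge oriented from its smaller to its larger endpoint) explicit, it avoids Lemma~\ref{22} and the auxiliary step~\eqref{21e} entirely, and the one-line pointwise inequality localizes exactly where the factor $\frac12$ and the ordering $a(x)<b(x)$ are used. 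The paper's version has the advantage of staying entirely within the notation used by the later Lemmas~\ref{24} and~\ref{25}, so the bookkeeping for the rest of the section is uniform; but as a standalone proof of Lemma~\ref{23}, yours is shorter and more transparent.
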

\begin{proof}
\begin{align*}
&\sum_{k\in\Lambda}|S_{k}\backslash(S_{1}\cup\ldots\cup S_{k-1})|\\=&\sum_{k=1}^{r}|S_{k}\backslash(S_{1}\cup\ldots\cup S_{k-1})|-\sum_{k\in\Lambda^{c}}|S_{k}\backslash
(S_{1}\cup\ldots\cup S_{k-1})|\\=&|E|-\sum_{k\in\Lambda^{c}}|S_{k}\backslash(S_{1}\cup\ldots\cup S_{k-1})|\text{ since }E=\bigcup_{k=1}^{r}S_{k}\\=&
\frac{1}{2}\sum_{k=1}^{r}|S_{k}|-\sum_{k\in\Lambda^{c}}|S_{k}\backslash(S_{1}\cup\ldots\cup S_{k-1})|\text{ by Lemma }\ref{22}\\=&
\frac{1}{2}\sum_{k\in\Lambda}|S_{k}|+\frac{1}{2}\sum_{k\in\Lambda^{c}}|S_{k}|-\frac{1}{2}\sum_{k\in\Lambda^{c}}|S_{k}\backslash(S_{1}\cup\ldots\cup S_{k-1})|-
\frac{1}{2}\sum_{k\in\Lambda^{c}}|S_{k}\backslash(S_{1}\cup\ldots\cup S_{k-1})|\\=&
\frac{1}{2}\sum_{k\in\Lambda}|S_{k}|+\frac{1}{2}\sum_{k\in\Lambda^{c}}|S_{k}\cap(S_{1}\cup\ldots\cup S_{k-1})|-\frac{1}{2}\sum_{k\in\Lambda^{c}}|S_{k}\backslash(S_{1}
\cup\ldots S_{k-1})|\\=&
\frac{1}{2}\sum_{k\in\Lambda}|S_{k}|+\frac{1}{2}\sum_{k\in\Lambda^{c}}|S_{k}\cap(S_{1}\cup\ldots\cup S_{k-1})|-\frac{1}{2}\sum_{\substack{1\leq k\leq k_{0}-1\\ k\in\Lambda^{c
}}}|S_{k}\backslash(S_{1}\cup\ldots S_{k-1})|\\&-\frac{1}{2}\sum_{\substack{k_{0}\leq k\leq n\\k\in\Lambda^{c}}}|S_{k}\backslash(S_{1}\cup\ldots\cup S_{k-1})|\\\geq&
\frac{1}{2}\sum_{k\in\Lambda}|S_{k}|+\frac{1}{2}\sum_{k\in\Lambda^{c}}|S_{k}\cap(S_{1}\cup\ldots\cup S_{k-1})|-\frac{1}{2}\sum_{\substack{1\leq k\leq k_{0}-1\\k\in\Lambda^{c}
}}|S_{k}|\\&-\frac{1}{2}\sum_{k_{0}\leq k\leq n}|S_{k}\backslash(S_{1}\cup\ldots\cup S_{k-1})|\\=&
\frac{1}{2}\sum_{\substack{1\leq k\leq k_{0}-1\\k\in\Lambda}}|S_{k}|+\frac{1}{2}\sum_{\substack{k_{0}\leq k\leq n\\k\in\Lambda}}|S_{k}|+\frac{1}{2}\sum_{k\in\Lambda^{c}}
|S_{k}\cap(S_{1}\cup\ldots\cup S_{k-1})|-\frac{1}{2}\sum_{\substack{1\leq k\leq k_{0}-1\\k\in\Lambda^{c}}}|S_{k}|
\\&-\frac{1}{2}\sum_{k_{0}\leq k\leq n}|S_{k}\backslash(S_{1}\cup\ldots\cup S_{k-1})|\\=&
\frac{1}{2}\left(\sum_{\substack{1\leq k\leq k_{0}-1\\k\in\Lambda}}|S_{k}|-\sum_{\substack{1\leq k\leq k_{0}-1\\k\in\Lambda^{c}}}|S_{k}|\right)+\\&
\frac{1}{2}\left(\sum_{\substack{k_{0}\leq k\leq n\\k\in\Lambda}}|S_{k}|+\sum_{k\in\Lambda^{c}}|S_{k}\cap(S_{1}\cup\ldots\cup S_{k-1})|-
\sum_{k_{0}\leq k\leq n}|S_{k}\backslash(S_{1}\cup\ldots\cup S_{k-1})|\right).
\end{align*}
To complete the proof, it suffices to show that
\begin{equation}\label{21e}
\sum_{\substack{k_{0}\leq k\leq r\\k\in\Lambda}}|S_{k}|+\sum_{k\in\Lambda^{c}}|S_{k}\cap(S_{1}\cup\ldots\cup S_{k-1})|-
\sum_{k_{0}\leq k\leq r}|S_{k}\backslash(S_{1}\cup\ldots\cup S_{k-1})|\geq 0.
\end{equation}
To begin,
\begin{align}\label{22e}
&\sum_{\substack{k_{0}\leq k\leq r\\k\in\Lambda}}|S_{k}|+\sum_{k\in\Lambda^{c}}|S_{k}\cap(S_{1}\cup\ldots\cup S_{k-1})|-
\sum_{k_{0}\leq k\leq r}|S_{k}\backslash(S_{1}\cup\ldots\cup S_{k-1})|\nonumber\\\geq&
\sum_{\substack{k_{0}\leq k\leq r\\k\in\Lambda}}|S_{k}\cap(S_{1}\cup\ldots\cup S_{k-1})|+\sum_{\substack{k_{0}\leq k\leq r\\k\in\Lambda^{c}}}|S_{k}\cap(S_{1}\cup\ldots\cup
S_{k-1})|\nonumber\\&-\sum_{k_{0}\leq k\leq r}|S_{k}\backslash(S_{1}\cup\ldots\cup S_{k-1})|\nonumber\\=&
\sum_{k_{0}\leq k\leq r}|S_{k}\cap(S_{1}\cup\ldots\cup S_{k-1})|-\sum_{k_{0}\leq k\leq r}|S_{k}\backslash(S_{1}\cup\ldots\cup S_{k-1})|\nonumber\\=&
\sum_{k_{0}\leq j\leq r}|S_{j}\cap(S_{1}\cup\ldots\cup S_{j-1})|-\sum_{k_{0}\leq k\leq r}|S_{k}\backslash(S_{1}\cup\ldots\cup S_{k-1})|.
\end{align}
By assumption, every element in $V$ is contained in at least two of the sets $S_{1},\ldots,S_{r}$. Therefore, if an element $e$ of $S_{k}$ is not in $S_{1}\cup\ldots\cup S_{k-1}$
then $e$ must be in $S_{k+1}\cup\ldots\cup S_{r}$. Thus,
\[|S_{k}\backslash(S_{1}\cup\ldots\cup S_{k-1})|\leq|S_{k}\cap(S_{k+1}\cup\ldots\cup S_{r})|\leq\sum_{k+1\leq j\leq r}|S_{k}\cap S_{j}|.\]
Hence,
\begin{eqnarray}\label{23e}
\sum_{k_{0}\leq k\leq r}|S_{k}\backslash(S_{1}\cup\ldots\cup S_{k-1})|&\leq&\sum_{k_{0}\leq k\leq r}\sum_{k+1\leq j\leq n}|S_{k}\cap S_{j}|\nonumber\\&=&
\sum_{k_{0}+1\leq j\leq r}\sum_{k_{0}\leq k\leq j-1}|S_{k}\cap S_{j}|\nonumber\\&\leq&
\sum_{k_{0}\leq j\leq r}\sum_{1\leq k\leq j-1}|S_{k}\cap S_{j}|.
\end{eqnarray}
By assumption, every element in $E$ is contained in at most two of the sets $S_{1},\ldots,S_{n}$. So the sets $S_{1}\cap S_{j},\ldots,S_{j-1}\cap S_{j}$ are disjoint.
So $\displaystyle\sum_{1\leq k\leq j-1}|S_{k}\cap S_{j}|=|S_{j}\cap(S_{1}\cup\ldots\cup S_{j-1})|$. Thus, by (\ref{23e}),
\[\sum_{k_{0}\leq k\leq r}|S_{k}\backslash(S_{1}\cup\ldots\cup S_{k-1})|\leq\sum_{k_{0}\leq j\leq r}|S_{j}\cap(S_{1}\cup\ldots\cup S_{j-1})|.\]
Combining this with (\ref{22e}), we obtain (\ref{21e}). This completes the proof.
\end{proof}
\begin{lemma}\label{24}
Let $m\geq 1$. Let $\Lambda_{1}$ and $\Lambda_{2}$ be subsets of $\{1,\ldots,m\}$. If $|[l,m]\cap\Lambda_{1}|\leq|[l,m]\cap\Lambda_{2}|$ for all $l\in\{1,\ldots,m\}$ then
there exists a strictly increasing function $f:\Lambda_{1}\to\Lambda_{2}$ such that $f(k)\geq k$ for all $k\in\Lambda_{1}$.
\end{lemma}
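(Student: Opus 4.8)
The plan is to give an explicit construction rather than an abstract matching argument. Enumerate $\Lambda_{1}=\{a_{1}<\cdots<a_{p}\}$ and $\Lambda_{2}=\{b_{1}<\cdots<b_{q}\}$. Applying the hypothesis with $l=1$ gives $p=|\Lambda_{1}|\leq|\Lambda_{2}|=q$, so the top $p$ elements $b_{q-p+1},\ldots,b_{q}$ of $\Lambda_{2}$ are well defined, and I would simply set
\[f(a_{i}):=b_{q-p+i},\qquad i=1,\ldots,p.\]
This maps $\Lambda_{1}$ into $\Lambda_{2}$ and is manifestly strictly increasing, since $i\mapsto q-p+i$ is strictly increasing and the $b_{j}$ are strictly increasing. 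The only thing left to verify is that $f(a_{i})\geq a_{i}$ for every $i$.

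For that step I would argue by contradiction. Suppose $b_{q-p+i}<a_{i}$ for some $i\in\{1,\ldots,p\}$, and put $l=a_{i}$. On one hand, $a_{i},a_{i+1},\ldots,a_{p}$ all lie in $[l,m]\cap\Lambda_{1}$, so $|[l,m]\cap\Lambda_{1}|\geq p-i+1$. On the other hand, $b_{1}<\cdots<b_{q-p+i}<a_{i}=l$, so any element of $\Lambda_{2}$ lying in $[l,m]$ must be one of $b_{q-p+i+1},\ldots,b_{q}$; hence $|[l,m]\cap\Lambda_{2}|\leq q-(q-p+i+1)+1=p-i$. Together these contradict the assumed inequality $|[l,m]\cap\Lambda_{1}|\leq|[l,m]\cap\Lambda_{2}|$, so no such $i$ exists, and $f(a_{i})\geq a_{i}$ for all $i$.

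I do not expect a genuine obstacle here: the content is the standard observation that coordinatewise dominance of the tail-count functions is exactly the condition needed to match $\Lambda_{1}$ into $\Lambda_{2}$ greedily from the largest element downward, and the explicit formula sidesteps any re-verification of hypotheses. The one point that requires a little care is the index arithmetic in the count $|[l,m]\cap\Lambda_{2}|\leq p-i$; an alternative route would be a downward induction on $|\Lambda_{1}|$ (delete $a_{p}$ and a suitable element of $\Lambda_{2}$ and check the hypothesis is preserved), but the closed-form choice $f(a_{i})=b_{q-p+i}$ is cleaner and I would use it.
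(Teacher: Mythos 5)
Your proposal is correct and is essentially the same as the paper's proof: your explicit formula $f(a_i)=b_{q-p+i}$ is precisely the paper's "$i$th largest of $\Lambda_1$ maps to $i$th largest of $\Lambda_2$" map, and your verification that $f(a_i)\geq a_i$ uses the same tail-count comparison at $l=a_i$, merely phrased as a contradiction instead of a direct inequality.
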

\begin{proof}
Since by assumption $|\Lambda_{1}|\leq|\Lambda_{2}|$, the function $f:\Lambda_{1}\to\Lambda_{2}$ defined by sending the $i$th largest element of $\Lambda_{1}$ to the $i$th
largest element of $\Lambda_{2}$ is well defined and strictly increasing. It remains to show that $f(k)\geq k$ for all $k\in\Lambda_{1}$. For each $i=1,\ldots,
|\Lambda_{1}|$, let $k_{i}$ be the $i$th largest element of $\Lambda_{1}$. By assumption, $|[k_{i},m]\cap\Lambda_{1}|\leq|[k_{i},m]\cap\Lambda_{2}|$ for all $i=1,\ldots,
|\Lambda_{1}|$. Note that $[k_{i},m]\cap\Lambda_{1}=\{k_{1},k_{2}\ldots,k_{i}\}$. So $|[k_{i},m]\cap\Lambda_{1}|=i$. Therefore, $|[k_{i},m]\cap\Lambda_{2}|\geq i$ for all $i=1,
\ldots,|\Lambda_{1}|$. So the $i$th largest element of $\Lambda_{2}$ is at least $k_{i}$. So $f(k_{i})\geq k_{i}$ for all $i=1,\ldots,|\Lambda_{1}|$ so $f(k)\geq k$ for all
$k\in\Lambda_{1}$.
\end{proof}
\begin{lemma}\label{25}
Let $S_{1},\ldots,S_{r}$ be subsets of a set $E$ such that every element $x\in E$ is contained in exactly two of the sets $S_{1},\ldots,S_{r}$. Assume that $|S_{1}|\leq
\ldots\leq|S_{r}|$. If $\Lambda\subset\{1,\ldots,r\}$ then
\[\sum_{k\in\Lambda}|S_{k}\backslash(S_{1}\cup\ldots\cup S_{k-1})|\geq\frac{1}{2}|S_{1}|(|\Lambda|-|\Lambda^{c}|).\]
\end{lemma}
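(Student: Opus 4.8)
The plan is to derive Lemma \ref{25} from Lemmas \ref{22}, \ref{23}, and \ref{24} by choosing an optimal cutoff $k_{0}$ in Lemma \ref{23}. First dispose of the trivial case: if $|\Lambda|\le|\Lambda^{c}|$ then the right-hand side is $\le 0$ while the left-hand side is a sum of cardinalities, hence $\ge 0$; so from now on assume $|\Lambda|>|\Lambda^{c}|$.

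For $j=0,1,\ldots,r$ put $h(j)=|[1,j]\cap\Lambda|-|[1,j]\cap\Lambda^{c}|$, so that $h(0)=0$ and $h(r)=|\Lambda|-|\Lambda^{c}|\ge 1$. Let $m_{0}$ be the largest index in $\{0,1,\ldots,r\}$ at which $h$ attains its maximum over $\{0,1,\ldots,r\}$; then $m_{0}\ge 1$ and $h(m_{0})\ge h(r)$. The point of this choice is that, since $h(m_{0})$ is the global maximum, $h(l-1)\le h(m_{0})$ for every $l\in\{1,\ldots,m_{0}\}$, and taking differences of the counting functions turns this into $|[l,m_{0}]\cap\Lambda^{c}|\le|[l,m_{0}]\cap\Lambda|$ for all such $l$. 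Hence Lemma \ref{24}, applied with $m=m_{0}$, $\Lambda_{1}=\Lambda^{c}\cap[1,m_{0}]$ and $\Lambda_{2}=\Lambda\cap[1,m_{0}]$, yields a strictly increasing map $f:\Lambda^{c}\cap[1,m_{0}]\to\Lambda\cap[1,m_{0}]$ with $f(k)\ge k$.

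Since $|S_{1}|\le\ldots\le|S_{r}|$ we have $|S_{f(k)}|\ge|S_{k}|\ge|S_{1}|$, and writing $\Lambda\cap[1,m_{0}]$ as the disjoint union of $f(\Lambda^{c}\cap[1,m_{0}])$ and its complement (which has exactly $h(m_{0})$ elements) gives
\[\sum_{\substack{k\le m_{0}\\ k\in\Lambda}}|S_{k}|-\sum_{\substack{k\le m_{0}\\ k\in\Lambda^{c}}}|S_{k}|=\sum_{\substack{k\le m_{0}\\ k\in\Lambda^{c}}}\bigl(|S_{f(k)}|-|S_{k}|\bigr)+\sum_{\substack{k\in\Lambda\cap[1,m_{0}]\\ k\notin f(\Lambda^{c}\cap[1,m_{0}])}}|S_{k}|\ \ge\ h(m_{0})\,|S_{1}|.\]
If $m_{0}\le r-1$, apply Lemma \ref{23} with $k_{0}=m_{0}+1$ to obtain $\sum_{k\in\Lambda}|S_{k}\backslash(S_{1}\cup\ldots\cup S_{k-1})|\ge\frac{1}{2}h(m_{0})|S_{1}|\ge\frac{1}{2}h(r)|S_{1}|$, which is the claim. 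If instead $m_{0}=r$, then $\Lambda^{c}\cap[1,m_{0}]=\Lambda^{c}$, and we use Lemma \ref{22} directly: since $\sum_{k=1}^{r}|S_{k}\backslash(S_{1}\cup\ldots\cup S_{k-1})|=\frac{1}{2}\sum_{k=1}^{r}|S_{k}|$ and $|S_{k}\backslash(S_{1}\cup\ldots\cup S_{k-1})|\le|S_{k}|$, we get $\sum_{k\in\Lambda}|S_{k}\backslash(S_{1}\cup\ldots\cup S_{k-1})|\ge\frac{1}{2}\bigl(\sum_{k\in\Lambda}|S_{k}|-\sum_{k\in\Lambda^{c}}|S_{k}|\bigr)\ge\frac{1}{2}h(r)|S_{1}|$.

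The routine parts are the trivial case and the cardinality bookkeeping around $f$. The one step that needs an idea is the choice of $m_{0}$ as the last global maximizer of the partial-count function $h$: this is precisely what makes the hypothesis of Lemma \ref{24} hold on $[1,m_{0}]$ while simultaneously keeping $h(m_{0})\ge h(r)$. The only mild nuisance is the boundary case $m_{0}=r$, where Lemma \ref{23} is not literally applicable with $k_{0}=r+1$ and one falls back on Lemma \ref{22} (which is exactly that borderline instance of Lemma \ref{23} anyway).
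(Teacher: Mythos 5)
Your proof is correct and follows essentially the same route as the paper: reduce to an initial segment via Lemma \ref{23}, build the matching via Lemma \ref{24}, and compare $|S_{f(k)}|$ with $|S_k|$. The only cosmetic difference is that you phrase the cutoff as the last global maximizer $m_0$ of the running count $h$, whereas the paper splits into Case I (no $k_0$ with $h(k_0-1)\geq h(r)$, so $m_0=r$) and Case II (take $k_0$ minimal with $h(k_0-1)\geq h(r)$); your presentation is a slightly tidier packaging of the identical argument, including the same fallback to Lemma \ref{22} when the cutoff is all of $\{1,\ldots,r\}$.
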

\begin{proof}
Case I: {\it For every $1\leq l\leq r$, $|[l,r]\cap\Lambda^{c}|<|[l,r]\cap\Lambda|$.}\\
From the first four lines of the proof of Lemma \ref{23}, we have
\[\sum_{k\in\Lambda}|S_{k}\backslash(S_{1}\cup\ldots\cup S_{k-1})|=\frac{1}{2}\sum_{k=1}^{r}|S_{k}|-\sum_{k\in\Lambda^{c}}|S_{k}\backslash(S_{1}\cup\ldots\cup S_{k-1})|.\]
Thus,
\[\sum_{k\in\Lambda}|S_{k}\backslash(S_{1}\cup\ldots\cup S_{k-1})|\geq\frac{1}{2}\sum_{k=1}^{r}|S_{k}|-\sum_{k\in\Lambda^{c}}|S_{k}|=\frac{1}{2}\sum_{k\in\Lambda}|S_{k}|-
\frac{1}{2}\sum_{k\in\Lambda^{c}}|S_{k}|.\]
Taking $m=r$, $\Lambda_{1}=\Lambda^{c}$ and $\Lambda_{2}=\Lambda$ in Lemma \ref{24}, we obtain an injective function $f:\Lambda^{c}\to\Lambda$ such that $f(k)\geq k$ for
all $k\in\Lambda^{c}$. Therefore,
\begin{eqnarray*}
\sum_{k\in\Lambda}|S_{k}\backslash(S_{1}\cup\ldots\cup S_{k-1})|&=&\frac{1}{2}\sum_{j\in\Lambda}|S_{j}|-\frac{1}{2}\sum_{k\in\Lambda^{c}}|S_{k}|\\&=&
\frac{1}{2}\sum_{j\in f(\Lambda^{c})}|S_{j}|+\frac{1}{2}\sum_{j\in\Lambda\backslash f(\Lambda^{c})}|S_{j}|-\frac{1}{2}\sum_{k\in\Lambda^{c}}|S_{k}|\\&=&
\frac{1}{2}\sum_{k\in \Lambda^{c}}|S_{f(k)}|+\frac{1}{2}\sum_{j\in\Lambda\backslash f(\Lambda^{c})}|S_{j}|-\frac{1}{2}\sum_{k\in\Lambda^{c}}|S_{k}|\\&=&
\frac{1}{2}\sum_{k\in \Lambda^{c}}(|S_{f(k)}|-|S_{k}|)+\frac{1}{2}\sum_{j\in\Lambda\backslash f(\Lambda^{c})}|S_{j}|\\&\geq&
0+\frac{1}{2}|\Lambda\backslash f(\Lambda^{c})||S_{1}|.
\end{eqnarray*}
The last inequality follows from the fact that $f(k)\geq k$ for all $k\in\Lambda^{c}$ and the assumption that $|S_{1}|\leq\ldots\leq|S_{r}|$. Since $|\Lambda\backslash
f(\Lambda^{c})|=|\Lambda|-|f(\Lambda^{c})|=|\Lambda|-|\Lambda^{c}|$, it follows that
\[\sum_{k\in\Lambda}|S_{k}\backslash(S_{1}\cup\ldots\cup S_{k-1})|\geq\frac{1}{2}(|\Lambda|-|\Lambda^{c}|)|S_{1}|.\]
Case II: {\it There exists $1\leq k_{0}\leq r$ such that $|[k_{0},r]\cap\Lambda^{c}|\geq |[k_{0},r]\cap\Lambda|$.}\\
We may assume that $k_{0}$ is the smallest one with such property. We may also assume that $k_{0}>1$. Otherwise, the result is trivial. Thus, we have
$|[l,k_{0}-1]\cap\Lambda^{c}|<|[l,k_{0}-1]\cap\Lambda|$ for all $l\in\{1,\ldots,k_{0}-1\}$. Otherwise, an $l$ failing this property would contradict with the minimality of
$k_{0}$. Taking $m=k_{0}-1$, $\Lambda_{1}=[1,k_{0}-1]\cap\Lambda^{c}$ and $\Lambda_{2}=[1,k_{0}-1]\cap\Lambda$ in Lemma \ref{24}, we obtain an injective function
$f:[1,k_{0}-1]\cap\Lambda^{c}\to[1,k_{0}-1]\cap\Lambda$ satisfying $f(k)\geq k$ for all $k\in[1,k_{0}-1]\cap\Lambda^{c}$.

By Lemma \ref{23}, we have
\begin{align*}
&\sum_{k\in\Lambda}|S_{k}\backslash(S_{1}\cup\ldots\cup S_{k-1})|\\\geq&\frac{1}{2}\left(\sum_{\substack{1\leq k\leq k_{0}-1\\k\in\Lambda}}|S_{k}|-\sum_{\substack{1\leq k
\leq k_{0}-1\\k\in\Lambda^{c}}}|S_{k}|\right)\\=&\frac{1}{2}\left(\sum_{j\in[1,k_{0}-1]\cap\Lambda}|S_{j}|-\sum_{k\in[1,k_{0}-1]\cap\Lambda^{c}}|S_{k}|\right)\\=&
\frac{1}{2}\left(\sum_{j\in\{f(k):k\in[1,k_{0}-1]\cap\Lambda^{c}\}}|S_{j}|+\sum_{j\in[1,k_{0}-1]\cap\Lambda\backslash\{f(k):k\in[1,k_{0}-1]\cap\Lambda^{c}\}}|S_{j}|-\sum_{k\in[1,k_{0}-1]\cap\Lambda^{c}}|S_{k}|\right)\\=&
\frac{1}{2}\left(\sum_{k\in[1,k_{0}-1]\cap\Lambda^{c}}|S_{f(k)}|+\sum_{j\in[1,k_{0}-1]\cap\Lambda\backslash\{f(k):k\in[1,k_{0}-1]\cap\Lambda^{c}\}}|S_{j}|-\sum_{k\in[1,k_{0}-1]\cap\Lambda^{c}}|S_{k}|\right)\\=&
\frac{1}{2}\left(\sum_{k\in[1,k_{0}-1]\cap\Lambda^{c}}(|S_{f(k)}|-|S_{k}|)+\sum_{j\in[1,k_{0}-1]\cap\Lambda\backslash\{f(k):k\in[1,k_{0}-1]\cap\Lambda^{c}\}}|S_{j}|\right)
\\\geq&\frac{1}{2}(0+|[1,k_{0}-1]\cap\Lambda\backslash\{f(k):k\in[1,k_{0}-1]\cap\Lambda^{c}\}||S_{1}|).
\end{align*}
The last equality follows from the fact that $f(k)\geq k$ for all $k\in[1,k_{0}-1]\cap\Lambda^{c}$ and the assumption that $|S_{1}|\leq\ldots\leq|S_{r}|$. Therefore,
\begin{eqnarray*}
\sum_{k\in\Lambda}|S_{k}\backslash(S_{1}\cup\ldots\cup S_{k-1})|&\geq&\frac{1}{2}(|[1,k_{0}-1]\cap\Lambda|-|\{f(k):k\in[1,k_{0}-1]\cap\Lambda^{c}\}|)|S_{1}|\\&=&
\frac{1}{2}(|[1,k_{0}-1]\cap\Lambda|-|[1,k_{0}-1]\cap\Lambda^{c}|)|S_{1}|\\&=&
\frac{1}{2}(|\Lambda|-|[k_{0},r]\cap\Lambda|-|\Lambda^{c}|+|[k_{0},r]\cap\Lambda^{c}|)|S_{1}|\\&\geq&
\frac{1}{2}(|\Lambda|-|\Lambda^{c}|)|S_{1}|.
\end{eqnarray*}
The last inequality follows from Case II assumption.
\end{proof}
\begin{proof}[Proof of Lemma \ref{21}]
Let $\Lambda=\{1\leq k\leq r:|S_{k}\backslash(S_{1}\cup\ldots\cup S_{k-1})|\leq t\}$. Then
\begin{align*}
&\min(t,|S_{1}|)+\min(t,|S_{2}\backslash S_{1}|)+\ldots+\min(t,|S_{n}\backslash(S_{1}\cup\ldots\cup S_{n-1})|)\\=&\sum_{k\in\Lambda}|S_{k}\backslash(S_{1}\cup\ldots\cup
S_{k-1})|+t|\Lambda^{c}|.
\end{align*}
If $|\Lambda^{c}|\geq\frac{r}{2}$ then
\[\min(t,|S_{1}|)+\min(t,|S_{2}\backslash S_{1}|)+\ldots+\min(t,|S_{r}\backslash(S_{1}\cup\ldots\cup S_{r-1})|)\geq\frac{tr}{2}\]
and the result follows. If $|\Lambda|\geq\frac{r}{2}$ then $|\Lambda|-|\Lambda^{c}|\geq 0$ so by Lemma \ref{25}, it follows that
\begin{align*}
&\min(t,|S_{1}|)+\min(t,|S_{2}\backslash S_{1}|)+\ldots+\min(t,|S_{n}\backslash(S_{1}\cup\ldots\cup S_{n-1})|)\\\geq&\frac{1}{2}|S_{1}|(|\Lambda|-|\Lambda^{c}|)+
t|\Lambda^{c}|\\\geq&\frac{1}{2}\min(t,|S_{1}|)(|\Lambda|-|\Lambda^{c}|)+\min(t,|S_{1}|)|\Lambda^{c}|=\frac{1}{2}\min(t,|S_{1}|)(|\Lambda|+|\Lambda^{c}|)=
\frac{\min(t,|S_{1}|)}{2}r.
\end{align*}
\end{proof}
\section{Proof of the main result}
\begin{lemma}\label{30}
If $y$ and $z$ are nonnegative random variables then for every $0<\epsilon<1$,
\[\mathbb{E}yz\leq(\mathbb{E}y)^{1-\epsilon}(\mathbb{E}y(z^{\frac{1}{\epsilon}}))^{\epsilon}.\]
\end{lemma}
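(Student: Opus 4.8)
The plan is to reduce the statement to a single application of H\"older's inequality via a judicious factorization of the integrand. First I would record the pointwise identity
\[
yz=y^{1-\epsilon}\,\bigl(y\,z^{1/\epsilon}\bigr)^{\epsilon},
\]
valid for all nonnegative reals $y,z$: indeed the right-hand side equals $y^{1-\epsilon}\cdot y^{\epsilon}\cdot z^{\epsilon/\epsilon}=yz$, and the degenerate case $y=0$ causes no trouble since both sides then vanish (here $z^{1/\epsilon}$ is finite a.s.\ because $z$ is a genuine, finite-valued random variable).

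Next I would apply H\"older's inequality to the two factors $y^{1-\epsilon}$ and $(y\,z^{1/\epsilon})^{\epsilon}$ with the conjugate exponents $p=\frac{1}{1-\epsilon}$ and $q=\frac{1}{\epsilon}$; these are both greater than $1$ because $0<\epsilon<1$, and $\frac1p+\frac1q=(1-\epsilon)+\epsilon=1$. Since $(y^{1-\epsilon})^{p}=y$ and $\bigl((y\,z^{1/\epsilon})^{\epsilon}\bigr)^{q}=y\,z^{1/\epsilon}$, this gives
\[
\mathbb{E}[yz]=\mathbb{E}\bigl[y^{1-\epsilon}(y\,z^{1/\epsilon})^{\epsilon}\bigr]\le\bigl(\mathbb{E}[y]\bigr)^{1-\epsilon}\bigl(\mathbb{E}[y\,z^{1/\epsilon}]\bigr)^{\epsilon},
\]
which is exactly the asserted bound.

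There is essentially no obstacle beyond a routine check of the degenerate cases: if $\mathbb{E}[y]=\infty$ or $\mathbb{E}[y\,z^{1/\epsilon}]=\infty$ the right-hand side is $+\infty$ and there is nothing to prove, while if $\mathbb{E}[y]=0$ then $y=0$ almost surely, so the left-hand side is $0$. Thus the only real input is H\"older's inequality, and the single idea worth isolating is the exponent pair $p=\frac1{1-\epsilon}$, $q=\frac1\epsilon$ together with the factorization $yz=y^{1-\epsilon}(y\,z^{1/\epsilon})^{\epsilon}$; everything else is immediate.
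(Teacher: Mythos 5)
Your proof is correct and is essentially identical to the paper's: you write $yz=y^{1-\epsilon}(y\,z^{1/\epsilon})^{\epsilon}$, which is the same as the paper's $y^{1-\epsilon}(y^{\epsilon}z)$, and then apply H\"older with the conjugate exponents $\tfrac{1}{1-\epsilon}$ and $\tfrac{1}{\epsilon}$. The extra attention you pay to degenerate cases is harmless but not needed for the paper's application.
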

\begin{proof}
By H\"older's inequality, $\mathbb{E}yz=\mathbb{E}y^{1-\epsilon}(y^{\epsilon}z)\leq(\mathbb{E}y)^{1-\epsilon}(\mathbb{E}(y^{\epsilon}z)^{\frac{1}{\epsilon}})^{\epsilon}=(\mathbb{E}y)^{1-\epsilon}(\mathbb{E}y(z^{\frac{1}{\epsilon}}))^{\epsilon}$.
\end{proof}
\begin{lemma}\label{31}
Let $f_{1},\ldots,f_{r}$ be a random vector on $\mathbb{C}^{n}$ such that for every $\delta>0$ there exists $M_{\delta}>0$ such that
\[\sup_{x\in S^{n-1}}\mathbb{E}|(f,x)|^{4}\leq\frac{M_{\delta}}{n^{2(1-\delta)}}\text{ and }\mathbb{E}\|f\|^{k}\leq L_{k},\quad f\in\{f_{1},\ldots,f_{r}\},\;k\geq 1.\]
Then for every $\epsilon>0$ and $x_{1},\ldots,x_{r}\in\mathbb{C}^{n}$ with $\|x_{i}\|\leq 1$,
\[\mathbb{E}|(f_{1},x_{1})|\ldots|(f_{r},x_{r})|\leq\frac{C_{\epsilon}}{n^{\frac{1}{2}\min(r,4)(1-\epsilon)}},\]
where $C_{\epsilon}$ depends on $\epsilon$ and certain $M_{\delta}$ and $L_{k,\delta}$ but not on $n$.
\end{lemma}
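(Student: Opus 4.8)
The plan is to interpolate, via Lemma \ref{30}, between the $L^{4}$-decay coming from (at most) four of the factors and the crude pointwise bound $|(f_{i},x_{i})|\le\|f_{i}\|$ for the remaining ones. We may assume $0<\epsilon<1$, for otherwise $n^{-\frac{1}{2}\min(r,4)(1-\epsilon)}\ge1$ while $\mathbb{E}\prod_{i=1}^{r}|(f_{i},x_{i})|\le\mathbb{E}\prod_{i=1}^{r}\|f_{i}\|\le L_{r}$ by H\"older's inequality, and the claim is trivial. Put $m=\min(r,4)$ and set
\[y=\prod_{i=1}^{m}|(f_{i},x_{i})|,\qquad z=\prod_{i=m+1}^{r}\|f_{i}\|,\]
with $z\equiv1$ when $r\le4$; since $\|x_{i}\|\le1$ we have $\prod_{i=1}^{r}|(f_{i},x_{i})|\le yz$.

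Next I would fix $\delta=\epsilon'=\epsilon/2$ and record the two inputs to Lemma \ref{30}. By homogeneity the hypothesis gives $\mathbb{E}|(f,x)|^{4}\le M_{\delta}\|x\|^{4}n^{-2(1-\delta)}$ for all $x\in\mathbb{C}^{n}$, so H\"older's inequality (with all exponents equal to $m$) together with $\|\cdot\|_{L^{m}}\le\|\cdot\|_{L^{4}}$ yields
\[\mathbb{E}y\le\prod_{i=1}^{m}\big(\mathbb{E}|(f_{i},x_{i})|^{4}\big)^{1/4}\le\big(M_{\delta}\,n^{-2(1-\delta)}\big)^{m/4}.\]
On the other hand, bounding $|(f_{i},x_{i})|\le\|f_{i}\|$ for $i\le m$ and using H\"older (with all exponents equal to $r$) with the moment hypotheses $\mathbb{E}\|f_{i}\|^{k}\le L_{k}$,
\[\mathbb{E}\,y\,z^{1/\epsilon'}\le\prod_{i=1}^{m}\big(\mathbb{E}\|f_{i}\|^{r}\big)^{1/r}\prod_{i=m+1}^{r}\big(\mathbb{E}\|f_{i}\|^{r/\epsilon'}\big)^{1/r}\le L_{r}^{m/r}\,L_{r/\epsilon'}^{(r-m)/r}=:D,\]
a constant not depending on $n$.

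Applying Lemma \ref{30} with $\epsilon'$ in place of $\epsilon$ then gives
\[\mathbb{E}\prod_{i=1}^{r}|(f_{i},x_{i})|\le\mathbb{E}\,yz\le(\mathbb{E}y)^{1-\epsilon'}\big(\mathbb{E}\,y\,z^{1/\epsilon'}\big)^{\epsilon'}\le M_{\delta}^{m(1-\epsilon')/4}D^{\epsilon'}\,n^{-\frac{m}{2}(1-\delta)(1-\epsilon')}.\]
Since $(1-\delta)(1-\epsilon')=(1-\epsilon/2)^{2}\ge1-\epsilon$ and $n\ge1$, the right-hand side is at most $C_{\epsilon}\,n^{-\frac{1}{2}\min(r,4)(1-\epsilon)}$ with $C_{\epsilon}=M_{\epsilon/2}^{m(1-\epsilon')/4}D^{\epsilon'}$, depending only on $\epsilon$, $r$, $M_{\epsilon/2}$, $L_{r}$ and $L_{2r/\epsilon}$.

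The substantive step is the passage to the two displayed estimates: all the $n$-decay should come from the ``genuine'' factors, while the surplus factors $\|f_{m+1}\|,\dots,\|f_{r}\|$ must be absorbed without spoiling it. A direct H\"older split $\mathbb{E}yz\le\|y\|_{p}\|z\|_{q}$ is forced to take $q=\infty$ once $m=4$ (as $\|y\|_{p}$ is controlled only for $p\le1$), which gives no decay; Lemma \ref{30} circumvents this by exchanging the unavailable sup-norm bound on $z$ for an innocuous high moment of $\|f_{i}\|$ at the cost of only an $\epsilon'$ loss in the exponent. Everything else is routine bookkeeping with H\"older's inequality.
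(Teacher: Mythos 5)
Your proof is correct and uses the same essential mechanism as the paper: Lemma~\ref{30} is invoked to interpolate the fourth-moment decay of the inner-product factors against high moments of the norms. The paper first reduces to the case $f_{1}=\dots=f_{r}=f$, $x_{1}=\dots=x_{r}=x$ via H\"older and then treats $r\le 4$ and $r>4$ separately; you skip the reduction step and handle both cases uniformly by taking $y=\prod_{i\le m}|(f_{i},x_{i})|$ and $z=\prod_{i>m}\|f_{i}\|$ with $m=\min(r,4)$, which is a cosmetic streamlining rather than a different method.
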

\begin{proof}
By H\"older's inequality,
\[\mathbb{E}|(f_{1},x_{1})|\ldots|(f_{r},x_{r})|\leq(\mathbb{E}|(f_{1},x_{1})|^{r})^{\frac{1}{r}}\ldots(\mathbb{E}|(f_{r},x_{r})|^{r})^{\frac{1}{r}}\]
so it suffices to prove the lemma when $f_{1}=\ldots=f_{r}=f$ and $x_{1}=\ldots=x_{r}=x$. If $r>4$ then by Lemma \ref{30}, for every $\epsilon>0$,
\begin{eqnarray*}
\mathbb{E}|(f,x)|^{r}&\leq&\mathbb{E}|(f,x)|^{4}\|f\|^{r-4}\\&\leq&(\mathbb{E}|(f,x)|^{4})^{1-\frac{\epsilon}{2}}(\mathbb{E}|(f,x)|^{4}\|f\|^{\frac{2(r-4)}{\epsilon}})^{\frac{\epsilon}{2}}\\&\leq&(\mathbb{E}|(f,x)|^{4})^{1-\frac{\epsilon}{2}}(\mathbb{E}\|f\|^{4+\frac{2(r-4)}{\epsilon}})^{\frac{\epsilon}{2}}\\&\leq&\left(\frac{M_{\frac{\epsilon}{2}}}{n^{2(1-\frac{\epsilon}{2})}}\right)^{1-\frac{\epsilon}{2}}(L_{4+\frac{2(r-4)}{\epsilon}})^{\frac{\epsilon}{2}}\\&\leq&
\frac{M_{\frac{\epsilon}{2}}^{^{1-\frac{\epsilon}{2}}}}{n^{2(1-\epsilon)}}(L_{4+\frac{2(r-4)}{\epsilon}})^{\frac{\epsilon}{2}}.
\end{eqnarray*}
If $r\leq 4$ then by H\"older's inequality,
\[\mathbb{E}|(f,x)|^{r}\leq(\mathbb{E}|(f,x)|^{4})^{\frac{r}{4}}\leq\frac{M_{\epsilon}^{\frac{r}{4}}}{n^{\frac{r}{2}(1-\epsilon)}}.\]
\end{proof}
\begin{lemma}\label{Estimate}
Let $G=(V,E)$ be a graph with no loops but perhaps with multiple edges. Let $(\mathscr{B}_{v})_{v\in V}$ be independent $\sigma$-subalgebras of a probability space $(\Omega,\mathscr{B},\mathbb{P})$. For each $e\in E$, let $u_{1}(e)$ and $u_{2}(e)$ be the two endpoints of $e$ and let $h_{e}^{(1)}$ and $h_{e}^{(2)}$ be $\mathscr{B}_{u_{1}(e)}$-measurable and $\mathscr{B}_{u_{2}(e)}$-measurable random vectors on $\mathbb{C}^{n}$. Assume that for every $\delta>0$, there exist $M_{\delta}>0$ and $L_{k,\delta}$ such that
\[\sup_{x\in S^{n-1}}\mathbb{E}|(h,x)|^{4}\leq\frac{M_{\delta}}{n^{2(1-\delta)}}\text{ and }\mathbb{E}\|h\|^{k}\leq L_{k,\delta}n^{\delta},\quad h\in\bigcup_{e\in E}\{h_{e}^{(1)},h_{e}^{(2)}\},\;k\geq 1.\]
If every vertex has degree at least $4$, then for every $\epsilon>0$,
\[\mathbb{E}\prod_{e\in E}|\langle h_{e}^{(1)},h_{e}^{(2)}\rangle|\leq\frac{C_{\epsilon}}{n^{|V|(1-\epsilon)}},\]
where $C_{\epsilon}$ depends on $\epsilon$, the graph $G$ and certain $M_{\delta}$ and $L_{k,\delta}$ but not on $n$.
\end{lemma}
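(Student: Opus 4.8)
The plan is to integrate out the vertices one at a time, in an order chosen so that at each step the relevant factors can be estimated by Lemma~\ref{31}. Fix an enumeration $v_1,\dots,v_m$ of $V$ (with $m=|V|$) and, for each edge $e$, write $e\in S_k$ if $v_k$ is one of the two endpoints $u_1(e),u_2(e)$. Then each edge lies in exactly two of the sets $S_1,\dots,S_m$, so Lemmas~\ref{22}--\ref{25} and ultimately Lemma~\ref{21} apply to this family. The key idea is that when we condition on all the $\sigma$-algebras $\mathscr B_{v_k}$ except one, say $\mathscr B_{v_j}$, the factor $\prod_{e\in S_j}|\langle h_e^{(1)},h_e^{(2)}\rangle|$ involves the random vector $h_{v_j}$ (meaning whichever of $h_e^{(1)},h_e^{(2)}$ is $\mathscr B_{v_j}$-measurable) paired against vectors that are then deterministic; applying Lemma~\ref{31} with $r=\deg(v_j)$ and the $x_i$ proportional to those vectors yields a gain of $n^{-\frac12\min(\deg(v_j),4)(1-\epsilon)}$, at the cost of carrying along norms $\|h_e^{(1)}\|\cdot\|h_e^{(2)}\|$ on the ``other'' endpoints, whose $L^k$ norms are controlled by the hypothesis $\mathbb E\|h\|^k\le L_{k,\delta}n^\delta$.

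Concretely, I would run a greedy conditioning argument: pick the vertex whose $S$-set is currently smallest among those not yet integrated, integrate out that vertex's vector using Lemma~\ref{31} (and Hölder to split off the accompanying norm factors, absorbing them into later steps), and repeat. If the vertices are processed in increasing order of $|S_k|$, then after reindexing so that $|S_1|\le\cdots\le|S_m|$, step $k$ removes $|S_k\setminus(S_1\cup\cdots\cup S_{k-1})|$ ``fresh'' edges and so contributes a factor $n^{-\frac12\min(|S_k\setminus(S_1\cup\cdots\cup S_{k-1})|,\,4)(1-\epsilon')}$, with a slightly worse $\epsilon'$ to swallow the ever-present norm factors. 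Multiplying these gives a total exponent
\[
-\tfrac12(1-\epsilon')\sum_{k=1}^{m}\min\!\bigl(4,\,|S_k\setminus(S_1\cup\cdots\cup S_{k-1})|\bigr),
\]
and now Lemma~\ref{21}, applied with $t=4$, bounds the sum from below by $\tfrac{\min(4,|S_1|)}{2}\,m=\tfrac{\min(4,|S_1|)}{2}\,|V|$. Since every vertex has degree at least $4$, $|S_1|\ge 4$, so $\min(4,|S_1|)=4$, and the exponent is at most $-|V|(1-\epsilon')$. Choosing $\epsilon'$ appropriately in terms of $\epsilon$ and $m$ (which is fixed, depending only on $G$) gives the stated bound with a constant $C_\epsilon$ depending on $\epsilon$, $G$, and the constants $M_\delta,L_{k,\delta}$.

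The main obstacle is bookkeeping the norm factors cleanly. When I integrate out $v_j$, Lemma~\ref{31} requires the ``test vectors'' $x_i$ to be (conditionally) deterministic, but the vectors $h_e^{(\cdot)}$ at the other endpoints of edges in $S_j$ are in general \emph{not} yet deterministic at that stage — they become deterministic only after their own vertex is also conditioned on. The fix is to use Lemma~\ref{30}/Hölder to peel off, at each edge, a factor $\|h_e\|$ from the not-yet-integrated endpoint and carry it forward as a weight, so that when Lemma~\ref{31} is invoked the remaining vector inner products are of the form $|\langle h_{v_j}, x_i\rangle|$ with $\|x_i\|\le1$ after normalization (the normalization constants $\|h_e\|$ being exactly the carried weights). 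One must check that each norm weight $\|h_e\|$ is picked up at most a bounded number of times (once from each of its two endpoints) so that the final expectation of the product of these weights is controlled by a product of $L_{k,\delta}n^\delta$'s with $k$ and the number of factors bounded in terms of $|E|$; since $|E|$ and $|V|$ are fixed, these contribute only $n^{O(\delta)}$, absorbed by passing from $\epsilon'$ to $\epsilon$. A secondary point is that one should handle multiple edges between the same pair of vertices and the interaction of the two endpoints of such edges, but this is routine once the weight-carrying scheme is set up, since multiplicity just increases the fixed numbers $|E|$, $\deg(v)$.
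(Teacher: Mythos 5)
Your proposal matches the paper's proof in all essential respects: ordering the vertices by degree, normalizing the not-yet-integrated endpoint $h_e^{(2)}$ so that Lemma~\ref{31} can be applied conditionally one vertex at a time, summing the per-vertex gains via Lemma~\ref{21} with $t=4$ (and $|S_1|\ge 4$), and absorbing the accumulated norm factors through H\"older in the guise of Lemma~\ref{30}, using $\mathbb E\|h\|^k\le L_{k,\delta}n^\delta$ to pay only $n^{O(\delta)}$. The paper's only refinements over your sketch are organizational: it performs every normalization $h_e^{(2)}\mapsto h_e^{(2)}/(\|h_e^{(2)}\|+\eta)$ at the outset, proving the estimate for the fully normalized product (the claim (\ref{Claim})) by clean iterated conditioning and only then peeling off the single product $\prod_e(\|h_e^{(2)}\|+\eta)$ by one application of Lemma~\ref{30}, rather than interleaving the peeling with the conditioning; and it regularizes with $\eta>0$ to avoid dividing by zero, letting $\eta\to0$ at the end.
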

\begin{proof}
Let $v_{1},\ldots,v_{|V|}$ be an enumeration of $V$ with ascending order according to their degrees, i.e., defining $S_{j}$ to be the set of all edges incident to $v_{j}$, we have $|S_{1}|\leq|S_{2}|\leq\ldots|S_{|V|}|$. For each $j=1,\ldots,|V|$, if $e\in S_{j}\backslash(S_{1}\cup\ldots\cup S_{j-1})$ then either $u_{1}(e)=v_{j}$ or $u_{2}(e)=v_{j}$ and so by interchanging the values of $u_{1}(e)$ and $u_{2}(e)$ (and accordingly also $h_{e}^{(1)}$ and $h_{e}^{(2)}$), if necessary, we may assume that $u_{1}(e)=v_{j}$. Thus, for every $\eta>0$
\begin{align}\label{First}
&\mathbb{E}\prod_{e\in E}|\langle h_{e}^{(1)},h_{e}^{(2)}\rangle|\nonumber\\=&\mathbb{E}\prod_{j=1}^{|V|}\prod_{e\in S_{j}\backslash(S_{1}\cup\ldots\cup S_{j-1})}|\langle h_{e}^{(1)},h_{e}^{(2)}\rangle|\nonumber\\=&
\mathbb{E}\prod_{j=1}^{|V|}\prod_{e\in S_{j}\backslash(S_{1}\cup\ldots\cup S_{j-1})}\left|\left\langle h_{e}^{(1)},\frac{h_{e}^{(2)}}{\|h_{e}^{(2)}\|+\eta}\right\rangle\right|(\|h_{e}^{(2)}\|+\eta)\nonumber\\=&
\mathbb{E}\prod_{j=1}^{|V|}\prod_{e\in S_{j}\backslash(S_{1}\cup\ldots\cup S_{j-1})}\left|\left\langle h_{e}^{(1)},\frac{h_{e}^{(2)}}{\|h_{e}^{(2)}\|+\eta}\right\rangle\right|\prod_{j=1}^{|V|}\prod_{e\in S_{j}\backslash(S_{1}\cup\ldots\cup S_{j-1})}(\|h_{e}^{(2)}\|+\eta)\nonumber\\=&
\mathbb{E}\prod_{j=1}^{|V|}\prod_{e\in S_{j}\backslash(S_{1}\cup\ldots\cup S_{j-1})}\left|\left\langle h_{e}^{(1)},\frac{h_{e}^{(2)}}{\|h_{e}^{(2)}\|+\eta}\right\rangle\right|\prod_{e\in E}(\|h_{e}^{(2)}\|+\eta),
\end{align}
where as before, when $j=1$, $S_{j}\backslash(S_{1}\cup\ldots\cup S_{j-1})$ is understood as $S_{1}$.

Since $u_{1}(e)=v_{j}$, $h_{e}^{(1)}$ is $\mathscr{B}_{v_{j}}$-measurable. On the other hand, by assumption, $h_{e}^{(2)}$ is $\mathscr{B}_{u_{2}(e)}$-measurable; and since $G$ has no loops, $u_{2}(e)\neq u_{1}(e)=v_{j}$. Therefore, by Lemma \ref{31},
\begin{equation}\label{CondExp}
\mathbb{E}_{\mathscr{B}_{v_{j}}}\prod_{e\in S_{j}\backslash(S_{1}\cup\ldots\cup S_{j-1})}\left|\left\langle h_{e}^{(1)},\frac{h_{e}^{(2)}}{\|h_{e}^{(2)}\|+\eta}\right\rangle\right|\leq\frac{C_{\epsilon}}{n^{\frac{1}{2}\min(|S_{j}\backslash(S_{1}\cup\ldots S_{j-1})|,4)(1-\epsilon)}}.
\end{equation}
Note that the right hand side is a constant. We claim that
\begin{equation}\label{Claim}
\mathbb{E}\prod_{j=1}^{|V|}\prod_{e\in S_{j}\backslash(S_{1}\cup\ldots\cup S_{j-1})}\left|\left\langle h_{e}^{(1)},\frac{h_{e}^{(2)}}{\|h_{e}^{(2)}\|+\eta}\right\rangle\right|\leq\frac{C_{\epsilon}}{n^{|V|(1-\epsilon)}},
\end{equation}
where $C_{\epsilon}$ denotes any positive number depending on $\epsilon$, the graph $G$ and certain $M_{\delta}$ and $L_{k,\delta}$ but not on $n$.

To prove the claim, we write
\begin{align*}
&\mathbb{E}\prod_{j=1}^{|V|}\prod_{e\in S_{j}\backslash(S_{1}\cup\ldots\cup S_{j-1})}\left|\left\langle h_{e}^{(1)},\frac{h_{e}^{(2)}}{\|h_{e}^{(2)}\|+\eta}\right\rangle\right|\\=&\mathbb{E}\left(\prod_{e\in S_{1}}\left|\left\langle h_{e}^{(1)},\frac{h_{e}^{(2)}}{\|h_{e}^{(2)}\|+\eta}\right\rangle\right|\right)\left(\prod_{j=2}^{|V|}\prod_{e\in S_{j}\backslash(S_{1}\cup\ldots S_{j-1})}\left|\left\langle h_{e}^{(1)},\frac{h_{e}^{(2)}}{\|h_{e}^{(2)}\|+\eta}\right\rangle\right|\right).
\end{align*}
All the edges $e$ in the first parenthesis are incident to $v_{1}$, whereas all the $e$ in the second parenthesis are not incident to $v_{1}$. Thus, the term in the second parenthesis is independent of $\mathscr{B}_{v_{1}}$ and so
\begin{align*}
&\mathbb{E}\prod_{j=1}^{|V|}\prod_{e\in S_{j}\backslash(S_{1}\cup\ldots\cup S_{j-1})}\left|\left\langle h_{e}^{(1)},\frac{h_{e}^{(2)}}{\|h_{e}^{(2)}\|+\eta}\right\rangle\right|\\=&
\mathbb{E}\left(\mathbb{E}_{\mathscr{B}_{v_{1}}}\prod_{e\in S_{1}}\left|\left\langle h_{e}^{(1)},\frac{h_{e}^{(2)}}{\|h_{e}^{(2)}\|+\eta}\right\rangle\right|\right)\left(\prod_{j=2}^{|V|}\prod_{e\in S_{j}\backslash(S_{1}\cup\ldots S_{j-1})}\left|\left\langle h_{e}^{(1)},\frac{h_{e}^{(2)}}{\|h_{e}^{(2)}\|+\eta}\right\rangle\right|\right)\\\leq&
\frac{C_{\epsilon}}{n^{\frac{1}{2}\min(|S_{1}|,4)(1-\epsilon)}}\mathbb{E}\left(\prod_{j=2}^{|V|}\prod_{e\in S_{j}\backslash(S_{1}\cup\ldots S_{j-1})}\left|\left\langle h_{e}^{(1)},\frac{h_{e}^{(2)}}{\|h_{e}^{(2)}\|+\eta}\right\rangle\right|\right),
\end{align*}
where the inequality follows from (\ref{CondExp}). Continuing this procedure, we obtain
\begin{align*}
&\mathbb{E}\prod_{j=1}^{|V|}\prod_{e\in S_{j}\backslash(S_{1}\cup\ldots\cup S_{j-1})}\left|\left\langle h_{e}^{(1)},\frac{h_{e}^{(2)}}{\|h_{e}^{(2)}\|+\eta}\right\rangle\right|\\\leq&\frac{C_{\epsilon}}{n^{\frac{1}{2}\min(|S_{1}|,4)(1-\epsilon)}}\frac{C_{\epsilon}}{n^{\frac{1}{2}\min(|S_{2}\backslash S_{1}|,4)(1-\epsilon)}}\ldots\frac{C_{\epsilon}}{n^{\frac{1}{2}\min(|S_{|V|}\backslash(S_{1}\cup\ldots\cup S_{|V|-1})|,4)(1-\epsilon)}}.
\end{align*}
By Lemma \ref{21}, it follows that
\[\mathbb{E}\prod_{j=1}^{|V|}\prod_{e\in S_{j}\backslash(S_{1}\cup\ldots\cup S_{j-1})}\left|\left\langle h_{e}^{(1)},\frac{h_{e}^{(2)}}{\|h_{e}^{(2)}\|+\eta}\right\rangle\right|\\\leq\frac{C_{\epsilon}}{n^{\frac{1}{4}\min(|S_{1}|,4)|V|(1-\epsilon)}},\]
possibly with different $C_{\epsilon}$. Since by assumption, $|S_{1}|\geq 4$, the claim (\ref{Claim}) is proved.
Having proved (\ref{Claim}), before we apply Lemma \ref{30}, we estimate
\begin{align*}
&\mathbb{E}\left(\prod_{j=1}^{|V|}\prod_{e\in S_{j}\backslash(S_{1}\cup\ldots\cup S_{j-1})}\left|\left\langle h_{e}^{(1)},\frac{h_{e}^{(2)}}{\|h_{e}^{(2)}\|+\eta}\right\rangle\right|\right)\left(\prod_{e\in E}(\|h_{e}^{(2)}\|+\eta)\right)^{\frac{1}{\epsilon}}\\\leq&
\mathbb{E}\left(\prod_{j=1}^{|V|}\prod_{e\in S_{j}\backslash(S_{1}\cup\ldots\cup S_{j-1})}\|h_{e}^{(1)}\|\right)\left(\prod_{e\in E}(\|h_{e}^{(2)}\|+\eta)^{\frac{1}{\epsilon}}\right)\\\leq&
\mathbb{E}\left(\prod_{e\in E}\|h_{e}^{(1)}\|\right)\left(\prod_{e\in E}(\|h_{e}^{(2)}\|+\eta)^{\frac{1}{\epsilon}}\right)\\\leq&
\left(\prod_{e\in E}\mathbb{E}\|h_{e}^{(1)}\|^{2|E|}\prod_{e\in E}\mathbb{E}(\|h_{e}^{(2)}\|+\eta)^{\frac{2|E|}{\epsilon}}\right)^{\frac{1}{2|E|}},
\end{align*}
where the last inequality follows from H\"older's inequality. Combining this estimate with (\ref{First}), (\ref{Claim}) and Lemma \ref{30}, we obtain
\[\mathbb{E}\prod_{e\in E}|\langle h_{e}^{(1)},h_{e}^{(2)}\rangle|\leq\frac{C_{\epsilon}}{n^{|V|(1-\epsilon)^{2}}}\left(\prod_{e\in E}\mathbb{E}\|h_{e}^{(1)}\|^{2|E|}\prod_{e\in E}\mathbb{E}(\|h_{e}^{(2)}\|+\eta)^{\frac{2|E|}{\epsilon}}\right)^{\frac{\epsilon}{2|E|}}.\]
Taking $\eta$ to be arbitarily small, we have
\begin{eqnarray*}
\mathbb{E}\prod_{e\in E}|\langle h_{e}^{(1)},h_{e}^{(2)}\rangle|&\leq&\frac{C_{\epsilon}}{n^{|V|(1-\epsilon)^{2}}}\left(\prod_{e\in E}\mathbb{E}\|h_{e}^{(1)}\|^{2|E|}\prod_{e\in E}\mathbb{E}\|h_{e}^{(2)}\|^{\frac{2|E|}{\epsilon}}\right)^{\frac{\epsilon}{2|E|}}\\&\leq&
\frac{C_{\epsilon}}{n^{|V|(1-\epsilon)^{2}}}\left(\prod_{e\in E}(L_{2|E|,1}n)\prod_{e\in E}(L_{\frac{2|E|}{\epsilon},1}n)\right)^{\frac{\epsilon}{2|E|}}\\&\leq&
\frac{C_{\epsilon}}{n^{|V|(1-\epsilon)^{2}}}\left(\prod_{e\in E}L_{2|E|,1}\prod_{e\in E}L_{\frac{2|E|}{\epsilon},1}\right)^{\frac{\epsilon}{2|E|}}n^{\epsilon},
\end{eqnarray*}
where the second inequality follows from the assumption. This completes the proof with a different $\epsilon$.
\end{proof}
\begin{lemma}\label{Trace}
Suppose that $(\mathscr{B}_{j})_{j\in J}$ are independent $\sigma$-subalgebras of a probability space $(\Omega,\mathscr{B},\mathbb{P})$. Let $j:\{1,\ldots,p\}\to J$ be such that $\ker j$ is a crossing partition on $\{1,\ldots,p\}$. For each $i=1,\ldots,p$, let $f_{i}^{(1)},f_{i}^{(2)}$ be $\mathscr{B}_{j(i)}$-measurable functions on $\Omega$. Assume that for every $\delta>0$, there exist $M_{\delta}>0$ and $L_{k,\delta}>0$, $k\geq 1$ such that
\begin{equation}\label{31e}
\sup_{x\in S^{n-1}}\mathbb{E}|(f,x)|^{4}\leq\frac{M_{\delta}}{n^{2(1-\delta)}}\text{ and }\mathbb{E}\|f\|^{k}\leq L_{k,\delta}n^{\delta},\quad f\in\{f_{1}^{(1)},f_{1}^{(2)},\ldots,f_{p}^{(1)},f_{p}^{(2)}\},\;k\geq 1
\end{equation}
Then for every $\epsilon>0$,
\[|\mathbb{E}\circ\mathrm{tr}(f_{1}^{(1)}\otimes f_{1}^{(2)})(f_{2}^{(1)}\otimes f_{2}^{(2)})\ldots(f_{p}^{(1)}\otimes f_{p}^{(2)})|\leq\frac{C_{\epsilon}}{n^{|\{j(1),\ldots,j(p)\}|+1-\epsilon}},\]
where $C_{\epsilon}>0$ depends on $\epsilon,p$ and certain $M_{\delta}$ and $L_{k,\delta}$ but not on $n$.
\end{lemma}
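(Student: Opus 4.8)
The plan is to reduce the normalized trace to a cyclic product of inner products and then to estimate its expectation by integrating out the independent $\sigma$-subalgebras one at a time, in an order dictated by the crossing. Since $(u\otimes v)(u'\otimes v')=\langle v,u'\rangle\,(u\otimes v')$, one has
\[
\mathrm{tr}\,(f_{1}^{(1)}\otimes f_{1}^{(2)})\cdots(f_{p}^{(1)}\otimes f_{p}^{(2)})=\frac{1}{n}\prod_{i=1}^{p}\langle f_{i}^{(2)},f_{i+1}^{(1)}\rangle ,
\]
with the convention $f_{p+1}^{(1)}=f_{1}^{(1)}$, so that it suffices to bound $\big|\mathbb{E}\prod_{i=1}^{p}\langle f_{i}^{(2)},f_{i+1}^{(1)}\rangle\big|$ by $C_{\epsilon}n^{-(m-\epsilon)}$, where $m=|\{j(1),\ldots,j(p)\}|$. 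Since $\ker j$ is crossing, fix $a<b<c<d$ with $j(a)=j(c)$, $j(b)=j(d)$ and $j(a)\neq j(b)$; let $B$ be the block of $\ker j$ through $a,c$ and $B'$ the block through $b,d$.

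I would then integrate out the $\sigma$-subalgebras in the order: all subalgebras other than $\mathscr{B}_{j(a)},\mathscr{B}_{j(b)}$ first, then $\mathscr{B}_{j(b)}$, then $\mathscr{B}_{j(a)}$. Taking $\mathbb{E}_{\mathscr{B}_{v}}$ (conditionally on everything outside $\mathscr{B}_{v}$) contracts, inside the cyclic product of rank-one operators, the operators indexed by the block corresponding to $v$: multiplying the consecutive operators of that block collapses each maximal run of them into one rank-one operator times a product of ``internal'' inner products (which I bound crudely by products of norms), and the trace becomes a product of ``gap'' inner products $\langle v_{s},M_{s}u_{s+1}\rangle$ with $M_{s}$ the (possibly already partly contracted) matrix between two runs and $u_{s+1},v_{s}$ being $\mathscr{B}_{v}$-measurable. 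When $M_{s}$ is still rank-one this splits into two inner products of a $\mathscr{B}_{v}$-vector against a fixed vector, and Lemma \ref{31} with $r=2$ supplies a factor $n^{-(1-\epsilon)}$; by always contracting a block that currently has a rank-one gap — possible because the rank-one operators at $a,c$ and $b,d$ are not contracted until the last two steps — each of the $m-2$ ``other'' contractions earns such a factor (up to moment factors). When contracting $\mathscr{B}_{j(b)}$: the operator at $c$ lies strictly between those at $b$ and $d$, and the operator at $a$ lies between $d$ and $b$ cyclically, so the operators at $b$ and $d$ are separated on both sides and all four inner products involving $f_{b}^{(1)},f_{b}^{(2)},f_{d}^{(1)},f_{d}^{(2)}$ pair a $\mathscr{B}_{j(b)}$-vector with a fixed vector; Lemma \ref{31} with $r=4$ then yields a factor $n^{-2(1-\epsilon)}$. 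The final contraction of $\mathscr{B}_{j(a)}$ costs nothing beyond moment factors. Multiplying the $\tfrac1n$ from the trace by $n^{-(m-2)(1-\epsilon)}\cdot n^{-2(1-\epsilon)}$ and absorbing the accumulated norm factors $\|f_{i}^{(k)}\|$ at the end via Hölder's inequality and Lemma \ref{30} (using $\mathbb{E}\|f\|^{k}\leq L_{k,\delta}n^{\delta}$) gives $C_{\epsilon'}n^{-(m+1)+\epsilon'}$. (Alternatively, Lemma \ref{Estimate} can be invoked to package the estimate coming from the bulk of the blocks at once, after the operators of $B$ and $B'$ are absorbed into edges; the crossing is exactly what forces the resulting reduced graph to have minimum degree at least $4$, while the blocks $B,B'$ provide the extra factor $n^{-1}$.)

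The step I expect to be the main obstacle is the bookkeeping of the middle paragraph: verifying that at every stage the contracted block genuinely has a rank-one gap (so that a full factor $n^{-(1-\epsilon)}$ is earned rather than a useless $\|M_{s}\|$-estimate), controlling that the matrices produced by earlier contractions have operator norm $O(n^{\delta})$, or $O(n^{-1})$ for singleton blocks, via the $L^{4}$ hypothesis, and accounting for the proliferating factors $\|f_{i}^{(k)}\|$ so that the closing Hölder step costs only $n^{O(\delta)}$. The decisive use of the crossing is precisely to produce the one contraction worth $n^{-2(1-\epsilon)}$: for a non-crossing $\ker j$ no such contraction exists and the estimate degrades to $C/n^{m}$, consistent with non-crossing partitions surviving in the limit of Theorem \ref{11}.
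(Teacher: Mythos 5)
Your starting point — rewriting the normalized trace as $\tfrac1n\mathbb{E}\prod_{i}\langle f_i^{(2)},f_{i+1}^{(1)}\rangle$ and then integrating out the independent $\sigma$-subalgebras — is exactly the paper's, and you correctly identify the decisive role of the crossing (one contraction worth $n^{-2(1-\epsilon)}$ via Lemma~\ref{31} with $r=4$). But from that point on your route diverges from the paper's, and the part you flag yourself as ``the main obstacle'' is precisely where the paper's argument does something you have not reproduced. The paper does not attempt to pick a clever contraction order on the raw word. It first runs an induction/\emph{reduction}: whenever $j(i)=j(i+1)$, the two rank-one factors are merged into one (with the internal inner product absorbed into the new $f_i^{(1)}$); whenever some $j(i)$ appears only once, $f_i^{(1)}\otimes f_i^{(2)}$ is replaced by $n\,\mathbb{E}f_i^{(1)}\otimes f_i^{(2)}$ applied to $f_{i+1}^{(1)}$ (and a factor $\tfrac1n$ is pulled out), and one checks explicitly that the new vector $n(\mathbb{E}f_i^{(1)}\otimes f_i^{(2)})f_{i+1}^{(1)}$ still satisfies hypothesis~(\ref{31e}) because $\|n\,\mathbb{E}f_i^{(1)}\otimes f_i^{(2)}\|\le\sqrt{M_\delta}\,n^{\delta}$. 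The crossing assumption is used only to guarantee this reduction does not collapse $\{1,\ldots,p\}$ to a singleton, so it eventually stops at a word with $j(1)\ne j(2)\ne\cdots\ne j(p)\ne j(1)$ and every $j(i)$ occurring at least twice. At that point the multigraph ($V=\{j(1),\ldots,j(p)\}$, edges $=$ positions) has \emph{minimum degree} $\ge4$, and Lemma~\ref{Estimate} together with the graph inequality Lemma~\ref{21} yields $n^{-|V|(1-\epsilon)}$ at one stroke; the prefactor $\tfrac1n$ then gives the claimed $n^{-(m+1)+\epsilon}$.

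The specific gap in your bookkeeping is a double-counting danger that the reduction is designed to avoid. If you contract a singleton (or an interval) block by applying Lemma~\ref{31} to its two boundary inner products, those edges are consumed, and when you later contract $\mathscr{B}_{j(b)}$ you may have fewer than four ``usable'' inner products left — in the example $p=5$, $j=(a,b,a,b,c)$ with $\{5\}$ a singleton, the incremental degrees $d_j$ sum to $p=5<2m=6$, so no contraction order on the unreduced word can deliver $n^{-(m+1)}$ from Lemma~\ref{31} alone. The only way to close the count is to exploit that contracting a singleton produces a \emph{deterministic matrix} $M$ with $\|M\|=O(n^{-1+\delta})$ \emph{and}, at the same time, that the fused inner product $\langle f_4^{(2)},Mf_1^{(1)}\rangle$, split as $\langle f_4^{(2)},\widehat{Mf_1^{(1)}}\rangle\cdot\|Mf_1^{(1)}\|$, still feeds Lemma~\ref{31} (one more unit vector) for the subsequent $\mathscr{B}_{j(b)}$-contraction. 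You gesture at both pieces — the ``$O(n^{-1})$ for singleton blocks'' and the $r=4$ contraction — but ``bound crudely by products of norms'' for the gap products would destroy exactly that $n^{-1/2}$ factor, and you never show that the vector obtained after pre-multiplying by $M$ still satisfies the marginal $L^4$ bound. That verification is the paper's $n\,\mathbb{E}f\otimes f$ lemma, and without it (or an equivalent) your exponent count does not close. If you instead follow your ``Alternatively'' paragraph — reduce first, then invoke Lemma~\ref{Estimate} — you recover the paper's proof; as stated, though, the reduction that has to be performed is the singleton/adjacent-pair absorption, not an absorption of the crossing blocks $B,B'$.
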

\begin{proof}
We may assume that $j(1)\neq j(2)\neq\ldots\neq j(p)\neq j(1)$ and each $j(i)$ appears at least twice in the list $j(1),\ldots,j(p)$. Otherwise, if $j(i)=j(i+1)$ then
\[(f_{i}^{(1)}\otimes f_{i}^{(2)})(f_{i+1}^{(1)}\otimes f_{i+1}^{(2)})=\langle f_{i+1}^{(1)},f_{i}^{(2)}\rangle(f_{i}^{(1)}\otimes f_{i+1}^{(2)})=(\langle f_{i+1}^{(1)},f_{i}^{(2)}\rangle f_{i}^{(1)})\otimes f_{i+1}^{(2)}.\]
Note that $\langle f_{i+1}^{(1)},f_{i}^{(2)}\rangle f_{i}^{(1)}$ and $f_{i+1}^{(2)}$ are $\mathscr{B}_{j(i)}$-measurable since $j(i)=j(i+1)$. Also, by H\"older's inequality, $\langle f_{i+1}^{(1)},f_{i}^{(2)}\rangle f_{i}^{(1)}$ satisfies (\ref{31e}) perhaps with different $M_{\delta}$ and $L_{k,\delta}$. Thus, the result follows by induction hypothesis since the product $(f_{1}^{(1)}\otimes f_{1}^{(2)})\ldots(f_{p-1}^{(1)}\otimes f_{p}^{(2)})$ of $p$ terms becomes a product of $p-1$ terms. (The $i$th term and the $(i+1)$th term are combined.)

Similar argument works if we have $j(p)\neq j(1)$.

If there is a $j(i)$ that appears only once in the list $j(1),\ldots,j(p)$, then by independence of $(\mathscr{B}_{j})_{j\in J}$,
\begin{align}\label{32e}
&\mathbb{E}\circ\mathrm{tr}(f_{1}^{(1)}\otimes f_{1}^{(2)})\ldots(f_{p-1}^{(1)}\otimes f_{p}^{(2)})\nonumber\\=&
\mathbb{E}\circ\mathrm{tr}(f_{1}^{(1)}\otimes f_{1}^{(2)})\ldots(f_{i}^{(1)}\otimes f_{i}^{(2)})\ldots(f_{p-1}^{(1)}\otimes f_{p}^{(2)})\nonumber\\=&
\mathbb{E}\circ\mathrm{tr}(f_{1}^{(1)}\otimes f_{1}^{(2)})\ldots(\mathbb{E}f_{i}^{(1)}\otimes f_{i}^{(2)})\ldots(f_{p-1}^{(1)}\otimes f_{p}^{(2)})\nonumber\\=&
\mathbb{E}\circ\mathrm{tr}(f_{1}^{(1)}\otimes f_{1}^{(2)})\ldots(\mathbb{E}f_{i}^{(1)}\otimes f_{i}^{(2)})(f_{i+1}^{(1)}\otimes f_{i+1}^{(2)})\ldots(f_{p-1}^{(1)}\otimes f_{p}^{(2)})\nonumber\\=&
\mathbb{E}\circ\mathrm{tr}(f_{1}^{(1)}\otimes f_{1}^{(2)})\ldots((\mathbb{E}f_{i}^{(1)}\otimes f_{i}^{(2)})f_{i+1}^{(1)}\otimes f_{i+1}^{(2)})\ldots(f_{p}^{(1)}\otimes f_{p}^{(2)})\nonumber\\=&
\frac{1}{n}\mathbb{E}\circ\mathrm{tr}(f_{1}^{(1)}\otimes f_{1}^{(2)})\ldots(n(\mathbb{E}f_{i}^{(1)}\otimes f_{i}^{(2)})f_{i+1}^{(1)}\otimes f_{i+1}^{(2)})\ldots(f_{p}^{(1)}\otimes f_{p}^{(2)}).
\end{align}
Note that $\mathbb{E}f_{i}^{(1)}\otimes f_{i}^{(2)}$ is a deterministic matrix and
\begin{eqnarray*}
|\langle(\mathbb{E}f_{i}^{(1)}\otimes f_{i}^{(2)})x,y\rangle|&=&|\mathbb{E}\langle x,f_{i}^{(2)}\rangle\langle f_{i}^{(1)},y\rangle|\\&\leq&\mathbb{E}|\langle x,f_{i}^{(2)}\rangle||\langle f_{i}^{(1)},y\rangle|\\&\leq&(\mathbb{E}|\langle f_{i}^{(2)},x\rangle|^{2})^{\frac{1}{2}}(\mathbb{E}|\langle f_{i}^{(1)},y\rangle|^{2})^{\frac{1}{2}}\\&\leq&(\mathbb{E}|\langle f_{i}^{(2)},x\rangle|^{4})^{\frac{1}{4}}(\mathbb{E}|\langle f_{i}^{(1)},y\rangle|^{4})^{\frac{1}{4}}\\&\leq&\left(\frac{M_{\delta}}{n^{2(1-\delta)}}\right)^{\frac{1}{4}}\left(\frac{M_{\delta}}{n^{2(1-\delta)}}\right)^{\frac{1}{4}}\\&=&\frac{\sqrt{M_{\delta}}}{n^{1-\delta}},\quad x,y\in S^{n-1}.
\end{eqnarray*}
Thus,
\[\|n\mathbb{E}f_{i}^{(1)}\otimes f_{i}^{(2)}\|\leq\sqrt{M_{\delta}}n^{\delta}.\]
Hence, $n(\mathbb{E}f_{i}^{(1)}\otimes f_{i}^{(2)})f_{i+1}^{(1)}$ is $\mathscr{B}_{j(i+1)}$ and still satisfies (\ref{31e}) perhaps with different $M_{\delta}$ and $L_{k,\delta}$. Thus, in view of (\ref{32e}), the result follows by induction hypothesis since the product $(f_{1}^{(1)}\otimes f_{1}^{(2)})\ldots(f_{p}^{(1)}\otimes f_{p}^{(2)})$ of $p$ terms becomes a product of $p-1$ terms. (The $i$th term is absorbed by the $(i+1)$th term.)

Therefore, we may justifiably assume that $j(1)\neq j(2)\neq\ldots\neq j(p)\neq j(1)$ and each $j(i)$ appears at least twice in the list $j(1),\ldots,j(p)$.
\begin{align*}
&|\mathbb{E}\circ\mathrm{tr}(f_{1}^{(1)}\otimes f_{1}^{(2)})(f_{2}^{(1)}\otimes f_{2}^{(2)})\ldots(f_{p}^{(1)}\otimes f_{p}^{(2)})|\\=&\frac{1}{n}|\mathbb{E}\langle f_{1}^{(2)},f_{2}^{(1)}\rangle\langle f_{2}^{(2)},f_{3}^{(1)}\rangle\ldots\langle f_{p}^{(2)},f_{1}^{(1)}\rangle|\leq\frac{1}{n}\mathbb{E}|\langle f_{1}^{(2)},f_{2}^{(1)}\rangle||\langle f_{2}^{(2)},f_{3}^{(1)}\rangle|\ldots|\langle f_{p}^{(2)},f_{1}^{(1)}\rangle|.
\end{align*}
For notational convenience, let $j(p+1)=j(1)$ and $f_{p+1}^{(1)}=f_{1}^{(1)}$. Then we have
\begin{equation}\label{FirstEstimate}
|\mathbb{E}\circ\mathrm{tr}(f_{1}^{(1)}\otimes f_{1}^{(2)})(f_{2}^{(1)}\otimes f_{2}^{(2)})\ldots(f_{p}^{(1)}\otimes f_{p}^{(2)})|\leq\frac{1}{n}\mathbb{E}\prod_{i=1}^{p}|\langle f_{i}^{(2)},f_{i+1}^{(1)}\rangle|.
\end{equation}
We use Lemma \ref{Estimate} to estimate this. First, we take the vertex set $V=\{j(1),\ldots,j(p)\}$ and the edge set $E=\{1,\ldots,p\}$, where for each $i\in E$, the two endpoints are $u_{1}(i)=j(i)$ and $u_{2}(i)=j(i+1)$. There are no loops since we assume that $j(i)\neq j(i+1)$ for all $i=1,\ldots,p$. For each $i\in E$, take $h_{i}^{(1)}=f_{i}^{(2)}$ and $h_{i}^{(2)}=f_{i+1}^{(1)}$. To see that every vertex has degree at least $4$, recall that we assume that for every $j\in V=\{j(1),\ldots,j(p)\}$, there exist $i_{1}\neq i_{2}$ in $\{1,\ldots,p\}$ such that $j(i_{1})=j(i_{2})=j$. Since $j(1)\neq j(2)\neq\ldots\neq j(p)\neq j(1)$, $i_{1}$ and $i_{2}$ cannot be consective numbers. Therefore, the vertex $j$ is incident with the four distinct edges $i_{1}-1,i_{1},i_{2}-1,i_{2}$. (When $i_{1}=1$, $i_{1}-1=p$.) Thus, the assumptions of Lemma \ref{Estimate} are satisfied and so we obtain
\[\mathbb{E}\prod_{i=1}^{p}|\langle f_{i}^{(2)},f_{i+1}^{(1)}\rangle|\leq\frac{C_{\epsilon}}{n^{|\{j(1),\ldots,j(p)\}|(1-\epsilon)}}.\]
The result follows by combining this with \ref{FirstEstimate}.
\end{proof}
\begin{remark}
In Lemma \ref{Trace}, the assumption that $\ker j$ is a crossing partition is necessary because it guarantees that repeating the procedure of (1) combining the $i$th term and the $(i+1)$th term when $j(i)=j(i+1)$ and (2) the $i$th term being absorbed by the $(i+1)$th term when $j(i)$ appears only once in the list $j(1),\ldots,j(p)$ does not make reduce $\{1,\ldots,p\}$ to a singleton. Without the crossing assumption, one would have got Lemma \ref{BoundExp} below.
\end{remark}
As an immediate consequence of Lemma \ref{Trace}, we have
\begin{proposition}\label{FirstProp}
Suppose that $(f_{j})_{j\in J}$ is an independent family of random vectors on $\mathbb{C}^{n}$ such that
\[\sup_{x\in S^{n-1}}\mathbb{E}|(f_{j},x)|^{4}\leq\frac{L}{n^{2}}\text{ and }\mathbb{E}\|f_{j}\|^{k}\leq L_{k},\quad j\in J,\;k\geq 1\]
for some $L>0$ and $L_{k}>0$, $k\geq 1$ independent of $N$. Let $j:\{1,\ldots,p\}\to J$ be such that $\ker j$ is a crossing partition on $\{1,\ldots,p\}$. Then for every $\epsilon>0$,
\[|\mathbb{E}\circ\mathrm{tr}(f_{j(1)}\otimes f_{j(1)})\ldots(f_{j(p)}\otimes f_{j(p)})|\leq\frac{C_{\epsilon}}{n^{|\{j(1),\ldots,j(p)\}|+1-\epsilon}},\]
where $C_{\epsilon}>0$ depends on $\epsilon,p,L$ and certain $L_{k}$ but not on $n$.
\end{proposition}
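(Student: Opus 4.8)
The plan is to obtain Proposition \ref{FirstProp} as a direct specialization of Lemma \ref{Trace}. First I would set $\mathscr{B}_{j}=\sigma(f_{j})$ for each $j\in J$. Since $(f_{j})_{j\in J}$ is an independent family of random vectors, the $\sigma$-subalgebras $(\mathscr{B}_{j})_{j\in J}$ are independent, as Lemma \ref{Trace} requires. Keeping the same map $j\colon\{1,\ldots,p\}\to J$, I would then take $f_{i}^{(1)}=f_{i}^{(2)}=f_{j(i)}$ for $i=1,\ldots,p$; each of these random vectors is $\mathscr{B}_{j(i)}$-measurable, and the factor $f_{i}^{(1)}\otimes f_{i}^{(2)}$ occurring in Lemma \ref{Trace} is then precisely $f_{j(i)}\otimes f_{j(i)}$, so the product whose normalized trace is being estimated coincides with the one in Proposition \ref{FirstProp}.

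Next I would verify the moment hypothesis (\ref{31e}) of Lemma \ref{Trace} for the vectors $\{f_{1}^{(1)},f_{1}^{(2)},\ldots,f_{p}^{(1)},f_{p}^{(2)}\}=\{f_{j(1)},\ldots,f_{j(p)}\}$, all of which lie among the $f_{j}$, $j\in J$. The assumption $\sup_{x\in S^{n-1}}\mathbb{E}|(f_{j},x)|^{4}\leq L n^{-2}$ gives $\sup_{x\in S^{n-1}}\mathbb{E}|(f_{j},x)|^{4}\leq L n^{-2(1-\delta)}$ for every $\delta>0$, so one may take $M_{\delta}=L$; and $\mathbb{E}\|f_{j}\|^{k}\leq L_{k}\leq L_{k}n^{\delta}$ for every $\delta>0$, so one may take $L_{k,\delta}=L_{k}$. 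Thus (\ref{31e}) holds, and since $\ker j$ is a crossing partition by hypothesis, all assumptions of Lemma \ref{Trace} are in force.

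Applying Lemma \ref{Trace} then yields
\[|\mathbb{E}\circ\mathrm{tr}(f_{j(1)}\otimes f_{j(1)})\ldots(f_{j(p)}\otimes f_{j(p)})|\leq\frac{C_{\epsilon}}{n^{|\{j(1),\ldots,j(p)\}|+1-\epsilon}},\]
which is the claimed bound. Examining how $C_{\epsilon}$ is built up in the proof of Lemma \ref{Trace} (and, through it, in Lemmas \ref{Estimate} and \ref{31}), one sees that it depends only on $\epsilon$, on $p$, on the constant $L$ (entering via $M_{\delta}=L$), and on finitely many of the $L_{k}$ (entering via $L_{k,\delta}=L_{k}$), but not on $n$. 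There is no real obstacle in this argument: the single point that needs a line of justification is that the $n$-uniform bounds assumed in the proposition trivially imply the weaker $\delta$-dependent bounds demanded by Lemma \ref{Trace}, which is immediate since $n^{-2}\leq n^{-2(1-\delta)}$ and $1\leq n^{\delta}$ for all $n\geq 1$ and $\delta>0$.
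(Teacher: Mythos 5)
Your argument is exactly the intended one: the paper states Proposition~\ref{FirstProp} with the words ``As an immediate consequence of Lemma~\ref{Trace}, we have,'' and your specialization $\mathscr{B}_{j}=\sigma(f_{j})$, $f_{i}^{(1)}=f_{i}^{(2)}=f_{j(i)}$, together with the observation that the $n$-uniform moment bounds trivially yield the $\delta$-dependent bounds of hypothesis~(\ref{31e}) with $M_{\delta}=L$ and $L_{k,\delta}=L_{k}$, is precisely that deduction. The proof is correct and matches the paper's route.
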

The following lemma is the analog of Lemma \ref{Trace} for noncrossing partition. 
\begin{lemma}\label{BoundExp}
Suppose that $(\mathscr{B}_{j})_{j\in J}$ are independent $\sigma$-subalgebras of a probability space $(\Omega,\mathscr{B},\mathbb{P})$. Let $j:\{1,\ldots,p\}\to J$ be such that $\ker j$ is a noncrossing partition on $\{1,\ldots,p\}$. For each $i=1,\ldots,p$, let $f_{i}^{(1)},f_{i}^{(2)}$ be $\mathscr{B}_{j(i)}$-measurable functions on $\Omega$. Assume that for every $\delta>0$, there exist $M_{\delta}>0$ and $L_{k,\delta}>0$, $k\geq 1$ such that
\[\sup_{x\in S^{n-1}}\mathbb{E}|(f,x)|^{4}\leq\frac{M_{\delta}}{n^{2(1-\delta)}}\text{ and }\mathbb{E}\|f\|^{k}\leq L_{k,\delta}n^{\delta},\quad f\in\{f_{1}^{(1)},f_{1}^{(2)},\ldots,f_{p}^{(1)},f_{p}^{(2)}\},\;k\geq 1.\]
Then for every $\epsilon>0$,
\begin{equation}\label{BoundExpe}
\|\mathbb{E}(f_{1}^{(1)}\otimes f_{1}^{(2)})(f_{2}^{(1)}\otimes f_{2}^{(2)})\ldots(f_{p}^{(1)}\otimes f_{p}^{(2)})\|\leq\frac{C_{\epsilon}}{n^{|\{j(1),\ldots,j(p)\}|-\epsilon}},
\end{equation}
where $C_{\epsilon}>0$ depends on $\epsilon,p$ and certain $M_{\delta}$ and $L_{k,\delta}$ but not on $n$.
\end{lemma}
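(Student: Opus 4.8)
The plan is to prove Lemma~\ref{BoundExp} by induction on $p$, using the same two reduction moves as in the proof of Lemma~\ref{Trace} --- combining an adjacent pair of factors with equal index, and replacing a factor whose index occurs only once by its (deterministic) expectation and absorbing it into a neighbour --- but now exploiting the elementary fact that \emph{every noncrossing partition of $\{1,\dots,p\}$ has a block that is an interval $\{a,a+1,\dots,b\}$}. This is exactly what guarantees that, in contrast to the crossing case, one of the two moves is always available, so that the recursion runs all the way down to $p=1$ and there is never any need to invoke the graph estimate Lemma~\ref{Estimate} (which is what produces the extra ``$+1$'' in the exponent of Lemma~\ref{Trace}; cf.\ the Remark following it).

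For the base case $p=1$, the estimate $\|\mathbb{E}(f_{1}^{(1)}\otimes f_{1}^{(2)})\|\leq\sqrt{M_{\delta}}\,n^{\delta-1}$ (valid for every $\delta>0$), established in the proof of Lemma~\ref{Trace}, together with $|\{j(1)\}|=1$ and the choice $\delta=\epsilon$, gives the claim. For the inductive step, assume $p\geq 2$ and that the lemma holds for all shorter products. Since $\ker j$ is a noncrossing partition of $\{1,\dots,p\}$, it has a block $B$ that is an interval: pick a block $B$ minimizing $\max B-\min B$; if $B$ were not an interval there would be $c\notin B$ with $\min B<c<\max B$, and by the noncrossing condition the block of $c$ would lie in $\{\min B+1,\dots,\max B-1\}$, contradicting minimality.

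If $|B|\geq 2$, then $j(a)=j(a+1)$ for the left endpoint $a$ of $B$, and I would combine the $a$th and $(a+1)$th factors via $(f_{a}^{(1)}\otimes f_{a}^{(2)})(f_{a+1}^{(1)}\otimes f_{a+1}^{(2)})=(\langle f_{a+1}^{(1)},f_{a}^{(2)}\rangle f_{a}^{(1)})\otimes f_{a+1}^{(2)}$; as in the proof of Lemma~\ref{Trace}, the two new vectors are $\mathscr{B}_{j(a)}$-measurable and, by H\"older's inequality and Lemma~\ref{30}, still satisfy the moment hypotheses with different constants. This leaves a product of $p-1$ factors with index map $j'$ for which $\ker j'$ is again noncrossing (a restriction of $\ker j$) and $|\{j'(1),\dots,j'(p-1)\}|=|\{j(1),\dots,j(p)\}|$, so the induction hypothesis finishes this case. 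If instead $B=\{a\}$ is a singleton, then $j(a)$ occurs only once, so $\mathscr{B}_{j(a)}$ is independent of all the other $\sigma$-algebras and $\mathbb{E}[\cdots(f_{a}^{(1)}\otimes f_{a}^{(2)})\cdots]=\mathbb{E}[\cdots(\mathbb{E}(f_{a}^{(1)}\otimes f_{a}^{(2)}))\cdots]$; using $\|n\,\mathbb{E}(f_{a}^{(1)}\otimes f_{a}^{(2)})\|\leq\sqrt{M_{\delta}}\,n^{\delta}$ from Lemma~\ref{Trace}, I would pull out a factor $n^{-1}$ and absorb the deterministic matrix $n\,\mathbb{E}(f_{a}^{(1)}\otimes f_{a}^{(2)})$ into a neighbour (into $f_{a+1}^{(1)}$ when $a<p$, into $f_{p-1}^{(2)}$ when $a=p$). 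Again the result is a product of $p-1$ rank-one factors satisfying the moment hypotheses with different constants and with $|\{j'(\cdots)\}|=|\{j(1),\dots,j(p)\}|-1$, so the induction hypothesis yields $n^{-1}\cdot C_{\epsilon}\,n^{-(|\{j(1),\dots,j(p)\}|-1-\epsilon)}=C_{\epsilon}\,n^{-(|\{j(1),\dots,j(p)\}|-\epsilon)}$.

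The accounting to watch is that the merging move keeps $|\{j(1),\dots,j(p)\}|$ fixed while dropping $p$ by one, whereas the singleton move drops both by one but contributes a compensating factor $n^{-1}$, so in both cases the target exponent is preserved; and that ``with different constants'' is harmless in the sense of Lemmas~\ref{Estimate} and~\ref{Trace}, since only finitely many of the moments $\mathbb{E}\|f\|^{k}$ (with $k$ bounded in terms of $p$) are ever used, so one fixes at the outset how small $\delta$ must be, and the recursion --- of depth at most $p-1$ with $p$ fixed --- produces no loss of $\epsilon$. The only genuine obstacle, and the sole place the noncrossing hypothesis enters, is the existence of an interval block: without it, in the crossing case, one reaches an irreducible configuration (no adjacent equal indices, every index repeated) that neither move touches and that must instead be estimated through Lemma~\ref{Estimate}, which only gives the weaker ``$+1$'' exponent of Lemma~\ref{Trace}.
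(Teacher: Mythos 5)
Your proposal is correct and follows exactly the route the paper has in mind: the paper explicitly declines to write out the proof, saying only that it is ``exactly the same as the beginning of the proof of Lemma~\ref{Trace}'' and relies on the fact that a noncrossing partition always has an interval block (hence always admits either an adjacent-merge or a singleton-absorption); your write-up fleshes this sketch out, supplies the base case $p=1$, verifies that each move preserves the noncrossing property and the exponent accounting, and correctly identifies that the interval-block fact is what lets the recursion run to completion without ever invoking the graph estimate.
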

The only differences are that on the left hand side of (\ref{BoundExpe}), one has norm of expectation instead of trace expectation and that on the right hand side of (\ref{BoundExpe}), one only has $\frac{C_{\epsilon}}{n^{|\{j(1),\ldots,j(p)\}|-\epsilon}}$ instead of $\frac{C_{\epsilon}}{n^{|\{j(1),\ldots,j(p)\}|+1-\epsilon}}$ in Lemma \ref{Trace}. The proof of Lemma \ref{BoundExp} is exactly the same as the beginning of the proof of Lemma \ref{Trace}. One needs the fact that for every noncrossing partition $\pi$ on $\{1,\ldots,p\}$, at least one of the following holds.
\begin{enumerate}
\item There exists $i\in\{1,\ldots,p-1\}$ such that $i$ and $i+1$ are in the same block of $\pi$.
\item $\pi$ has a singleton block.
\end{enumerate}
This is because every noncrossing partition contains an interval block.

As an immediate consequence of Lemma \ref{BoundExpe}, we have
\begin{lemma}\label{BoundExp2}
Suppose that $(f_{j})_{j\in J}$ is an independent family of random vectors on $\mathbb{C}^{n}$ such that
\[\sup_{x\in S^{n-1}}\mathbb{E}|(f_{j},x)|^{4}\leq\frac{L}{n^{2}}\text{ and }\mathbb{E}\|f_{j}\|^{k}\leq L_{k},\quad j\in J,\;k\geq 1\]
for some $L>0$ and $L_{k}>0$, $k\geq 1$ independent of $N$. Let $j:\{1,\ldots,p\}\to J$ be such that $\ker j$ is a noncrossing partition on $\{1,\ldots,p\}$. Then for every $\epsilon>0$,
\[\|\mathbb{E}(f_{j(1)}\otimes f_{j(1)})\ldots(f_{j(p)}\otimes f_{j(p)})\|\leq\frac{C_{\epsilon}}{n^{|\{j(1),\ldots,j(p)\}|-\epsilon}},\]
where $C_{\epsilon}>0$ depends on $\epsilon,p,L$ and certain $L_{k}$ but not on $n$.
\end{lemma}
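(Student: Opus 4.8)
The plan is to obtain Lemma~\ref{BoundExp2} as the announced immediate specialization of Lemma~\ref{BoundExp}, so that no genuinely new argument is needed beyond checking that the hypotheses transfer. First I would take, for each $j\in J$, the $\sigma$-subalgebra $\mathscr{B}_{j}$ generated by $f_{j}$; since $(f_{j})_{j\in J}$ is an independent family, $(\mathscr{B}_{j})_{j\in J}$ is an independent family of $\sigma$-subalgebras of $(\Omega,\mathscr{B},\mathbb{P})$, as required by Lemma~\ref{BoundExp}. Given $j:\{1,\ldots,p\}\to J$ with $\ker j$ a noncrossing partition, I would set $f_{i}^{(1)}=f_{i}^{(2)}=f_{j(i)}$ for $i=1,\ldots,p$; each of these is $\mathscr{B}_{j(i)}$-measurable, and
\[(f_{1}^{(1)}\otimes f_{1}^{(2)})(f_{2}^{(1)}\otimes f_{2}^{(2)})\ldots(f_{p}^{(1)}\otimes f_{p}^{(2)})=(f_{j(1)}\otimes f_{j(1)})(f_{j(2)}\otimes f_{j(2)})\ldots(f_{j(p)}\otimes f_{j(p)}),\]
so that the left-hand side of the inequality in Lemma~\ref{BoundExp2} is precisely the left-hand side of (\ref{BoundExpe}).

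Next I would verify the moment hypotheses of Lemma~\ref{BoundExp}. The finite set $\{f_{1}^{(1)},f_{1}^{(2)},\ldots,f_{p}^{(1)},f_{p}^{(2)}\}$ equals $\{f_{j(1)},\ldots,f_{j(p)}\}$, a subset of $(f_{j})_{j\in J}$, and by hypothesis each member $f$ satisfies $\sup_{x\in S^{n-1}}\mathbb{E}|(f,x)|^{4}\leq L/n^{2}$ and $\mathbb{E}\|f\|^{k}\leq L_{k}$ for all $k\geq 1$. Since $n\geq 1$, for every $\delta>0$ we have $L/n^{2}\leq L/n^{2(1-\delta)}$ and $L_{k}\leq L_{k}n^{\delta}$, so the bounds required in Lemma~\ref{BoundExp} hold with $M_{\delta}=L$ and $L_{k,\delta}=L_{k}$ for all $\delta>0$. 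Hence Lemma~\ref{BoundExp} applies and yields, for every $\epsilon>0$,
\[\|\mathbb{E}(f_{j(1)}\otimes f_{j(1)})\ldots(f_{j(p)}\otimes f_{j(p)})\|\leq\frac{C_{\epsilon}}{n^{|\{j(1),\ldots,j(p)\}|-\epsilon}},\]
where $C_{\epsilon}$ depends on $\epsilon$, $p$ and on the constants $M_{\delta}=L$ and $L_{k,\delta}=L_{k}$ that are actually used, hence on $\epsilon$, $p$, $L$ and certain $L_{k}$, but not on $n$ (nor on $N$). As $\epsilon>0$ is arbitrary, this is exactly the claim.

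There is essentially no obstacle here: the entire content is the reduction to Lemma~\ref{BoundExp}, and the only point deserving a word is that the two scaling hypotheses of Lemma~\ref{BoundExp2}—marginal $L^{4}$ norm of order $n^{-1}$ and uniformly bounded norm moments—are the special, constant-in-$\delta$ case of the $\delta$-indexed families $M_{\delta}/n^{2(1-\delta)}$ and $L_{k,\delta}n^{\delta}$ appearing in Lemma~\ref{BoundExp}. The tracking of the dependence of $C_{\epsilon}$ is then inherited verbatim from Lemma~\ref{BoundExp}.
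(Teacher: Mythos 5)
Your proof is correct and matches the paper's intent exactly: the paper states Lemma~\ref{BoundExp2} as an ``immediate consequence'' of Lemma~\ref{BoundExp} without writing out the routine specialization, and you have correctly supplied those details (taking $\mathscr{B}_{j}=\sigma(f_{j})$, setting $f_{i}^{(1)}=f_{i}^{(2)}=f_{j(i)}$, and observing $M_{\delta}=L$, $L_{k,\delta}=L_{k}$ work for every $\delta>0$).
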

\begin{proposition}\label{SecondProp}
Suppose that $f_{1},\ldots,f_{N}$ are independent random vectors on $\mathbb{C}^{n}$ such that
\[\sup_{x\in S^{n-1}}\mathbb{E}|(f_{j},x)|^{4}\leq\frac{L}{n^{2}}\text{ and }\mathbb{E}\|f_{j}\|^{k}\leq L_{k},\quad j=1,\ldots,N,\;k\geq 1\]
for some $L>0$ and $L_{k}>0$, $k\geq 1$ independent of $n$ and $N$. If $n,N\to\infty$ in such a way that $\frac{n}{N}\to\lambda\in(0,\infty)$ and\[n^{\epsilon_{0}}\left\|\sum_{j=1}^{N}\mathbb{E}\|f_{j}\|^{2(k-1)}f_{j}\otimes f_{j}-a_{k}I\right\|\to 0,\quad k\geq 1,\]
for some $a_{k}\in\mathbb{C}$, $k\geq 1$ and $\epsilon_{0}>0$ independent of $n$ and $N$, then for every noncrossing partition $\pi$ on $\{1,\ldots,p\}$,
\[\left|\sum_{\substack{j:\{1,\ldots,p\}\to\{1,\ldots,N\}\\\ker j=\pi}}\mathbb{E}\circ\mathrm{tr}(f_{j(1)}\otimes f_{j(1)})\ldots(f_{j(p)}\otimes f_{j(p)})-\prod_{B\in\pi}a_{|B|}\right|\to 0.\]
\end{proposition}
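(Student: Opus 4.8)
I would prove the statement by induction on $p$, peeling one block off $\pi$ at a time. Write $T(\pi)=\sum_{\ker j=\pi}\mathbb{E}\circ\mathrm{tr}(f_{j(1)}\otimes f_{j(1)})\ldots(f_{j(p)}\otimes f_{j(p)})$ for the quantity in question, the sum being over $j:\{1,\ldots,p\}\to\{1,\ldots,N\}$. For $p=0$ (the empty partition) $T(\pi)=1=\prod_{B\in\pi}a_{|B|}$. For $p\geq1$, every noncrossing partition has an interval block $B=\{l,l+1,\ldots,m\}$ (take a block of minimal span); fix one, set $k=|B|$, and let $\pi'\in\mathrm{NC}(p-k)$ be $\pi$ with $B$ deleted, the remaining $p-k$ points relabelled in increasing order. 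I will show
\[T(\pi)=a_k\,T(\pi')+o(1),\]
so that the induction hypothesis $T(\pi')\to\prod_{B'\in\pi'}a_{|B'|}$ yields $T(\pi)\to a_k\prod_{B'\in\pi'}a_{|B'|}=\prod_{B\in\pi}a_{|B|}$.

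For the recursion: given $j$ with $\ker j=\pi$, let $j_0=j(l)=\cdots=j(m)$ and let $j'$ be the restriction of $j$ to the complement of $B$ (relabelled), so $\ker j'=\pi'$ and, since $B$ is a block, $j_0\notin\mathrm{range}(j')$. Using $(f_{j_0}\otimes f_{j_0})^k=\|f_{j_0}\|^{2(k-1)}(f_{j_0}\otimes f_{j_0})$, the product splits as $Q_1(j')\cdot\|f_{j_0}\|^{2(k-1)}(f_{j_0}\otimes f_{j_0})\cdot Q_2(j')$, where $Q_1(j'),Q_2(j')$ are the partial products over the positions before $l$ and after $m$. Since $f_{j_0}$ is independent of $Q_1(j'),Q_2(j')$ and $\mathrm{tr}[Q_1\,X\,Q_2]$ is linear in $X$, one may replace $\|f_{j_0}\|^{2(k-1)}(f_{j_0}\otimes f_{j_0})$ by $D_{j_0}:=\mathbb{E}\|f_{j_0}\|^{2(k-1)}f_{j_0}\otimes f_{j_0}$. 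Summing over the bijective correspondence $j\leftrightarrow(j',j_0)$ and writing $\sum_{j_0\notin\mathrm{range}(j')}D_{j_0}=a_kI+\widetilde E_{j'}$, the hypothesis together with the elementary bound $\|D_{j_0}\|\leq C/n$ (from $\mathbb{E}|(f_{j_0},x)|^4\leq L/n^2$ and $\mathbb{E}\|f_{j_0}\|^{4(k-1)}\leq L_{4(k-1)}$) gives $\|\widetilde E_{j'}\|\leq C'n^{-\epsilon'}$ with $\epsilon'=\min(\epsilon_0,1)$. Hence
\[T(\pi)=a_k\sum_{\ker j'=\pi'}\mathbb{E}\circ\mathrm{tr}[Q_1(j')Q_2(j')]+\sum_{\ker j'=\pi'}\mathbb{E}\circ\mathrm{tr}[Q_1(j')\widetilde E_{j'}Q_2(j')]=a_k\,T(\pi')+\mathrm{error},\]
and the proof reduces to showing $\mathrm{error}\to0$.

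The bound on $\mathrm{error}$ is the one delicate step. A naive estimate $|\mathrm{tr}[Q_1\widetilde E Q_2]|\leq\|\widetilde E\|\,\mathrm{tr}|Q_2Q_1|$ followed by a direct estimate of $\mathbb{E}\,\mathrm{tr}|Q_2Q_1|$ loses a power of $n$ at each low-degree vertex of the chain underlying $Q_2Q_1$ and is only good enough when $\epsilon_0$ is large; instead I would exploit cyclicity of the trace: $\mathbb{E}\circ\mathrm{tr}[Q_1(j')\widetilde E_{j'}Q_2(j')]=\mathrm{tr}\big(\widetilde E_{j'}\,\mathbb{E}[Q_2(j')Q_1(j')]\big)$, note that $Q_2(j')Q_1(j')$ is a product of $p-k$ matrices $f_{\hat\jmath(r)}\otimes f_{\hat\jmath(r)}$ where $\hat\jmath$ is a cyclic reindexing of $j'$ (so $\ker\hat\jmath$ is a cyclic shift of $\pi'$, hence again noncrossing, with $|\mathrm{range}(\hat\jmath)|=|\pi'|$), and apply Lemma \ref{BoundExp2} to obtain $\|\mathbb{E}[Q_2(j')Q_1(j')]\|\leq C_\epsilon n^{-|\pi'|+\epsilon}$. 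Thus $|\mathbb{E}\circ\mathrm{tr}[Q_1(j')\widetilde E_{j'}Q_2(j')]|\leq\|\widetilde E_{j'}\|\,\|\mathbb{E}[Q_2(j')Q_1(j')]\|\leq C_\epsilon' n^{-\epsilon'-|\pi'|+\epsilon}$, and since there are at most $N^{|\pi'|}\leq Cn^{|\pi'|}$ maps $j'$ with $\ker j'=\pi'$, $|\mathrm{error}|\leq C_\epsilon n^{\epsilon-\epsilon'}\to0$ for $\epsilon<\epsilon'$. This closes the induction. The supporting facts — existence of an interval block, cyclic invariance of $\mathrm{NC}(p-k)$, the independence/linearity step, and $\|D_{j_0}\|\leq C/n$ — are routine, as is checking that all constants stay independent of $n,N$ (they may depend on the fixed $p$).
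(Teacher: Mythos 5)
Your proposal is correct and takes essentially the same route as the paper: induction on $p$, peel off an interval block of $\pi$, replace the corresponding rank-one factor by $a_kI$ plus a small correction, and control the remainder with Lemma~\ref{BoundExp2} together with the $N^{|\pi'|}$ count of maps $j'$. The only differences are organizational --- you fold the hypothesis error $\sum_{j_0}D_{j_0}-a_kI$ and the coincidence subtraction $\sum_{j_0\in\mathrm{range}(j')}D_{j_0}$ into a single matrix $\widetilde E_{j'}$ (using the direct bound $\|D_{j_0}\|\leq C/n$) and invoke cyclicity of the trace inside the error estimate rather than at the outset to rotate $B$ to $\{1,\ldots,q\}$, whereas the paper rotates first and then treats the two corrections as separate terms.
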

\begin{proof}
We prove by induction on $p$. For $p=1$, the result is obvious. For $p\geq 2$, since $\pi$ is a noncrossing partition on $\{1,\ldots,p\}$, there is an interval block $B_{0}\in\pi$. For simplicity, since the trace is cyclic invariant, we may assume that $B_{0}=\{1,\ldots,q\}$ for some $1\leq q\leq p$. Thus, for every $j:\{1,\ldots,p\}\to\{1,\ldots,N\}$ with $\ker j=\pi$, we have
\begin{align*}
&\mathbb{E}\circ\mathrm{tr}(f_{j(1)}\otimes f_{j(1)})\ldots(f_{j(p)}\otimes f_{j(p)})\\=&
\mathrm{tr}\mathbb{E}(f_{j(1)}\otimes f_{j(1)})\ldots(f_{j(q)}\otimes f_{j(q)})\mathbb{E}(f_{j(q+1)}\otimes f_{j(q+1)})\ldots(f_{j(p)}\otimes f_{j(p)})\\=&
\mathrm{tr}\mathbb{E}(\|f_{j(1)}\|^{2(q-1)}f_{j(1)}\otimes f_{j(1)})\mathbb{E}(f_{j(q+1)}\otimes f_{j(q+1)})\ldots(f_{j(p)}\otimes f_{j(p)}),
\end{align*}
since $j(1)=\ldots=j(q)$. Note that every $j:\{1,\ldots,p\}\to\{1,\ldots,N\}$ with $\ker j=\pi$ corresponds to $j:\{q+1,\ldots,p\}\to\{1,\ldots,N\}$ with $\ker l=\pi\backslash\{B_{0}\}$ and $j(1)\in\{1,\ldots,N\}\backslash\{j(q+1),\ldots,j(p)\}$. Thus,
\begin{align*}
&\sum_{\substack{j:\{1,\ldots,p\}\to\{1,\ldots,N\}\\\ker j=\pi}}\mathbb{E}\circ\mathrm{tr}(f_{j(1)}\otimes f_{j(1)})\ldots(f_{j(p)}\otimes f_{j(p)})\\=&
\sum_{\substack{j:\{q+1,\ldots,p\}\to\{1,\ldots,N\}\\\ker j=\pi\backslash\{B\}}}\sum_{j(1)\in\{1,\ldots,N\}\backslash\{j(q+1),\ldots,j(p)\}}\\&
\mathrm{tr}\mathbb{E}(\|f_{j(1)}\|^{2(q-1)}f_{j(1)}\otimes f_{j(1)})\mathbb{E}(f_{j(q+1)}\otimes f_{j(q+1)})\ldots(f_{j(p)}\otimes f_{j(p)})\\=&
\sum_{\substack{j:\{q+1,\ldots,p\}\to\{1,\ldots,N\}\\\ker j=\pi\backslash\{B\}}}\sum_{j(1)\in\{1,\ldots,N\}}\\&
\mathrm{tr}\mathbb{E}(\|f_{j(1)}\|^{2(q-1)}f_{j(1)}\otimes f_{j(1)})\mathbb{E}(f_{j(q+1)}\otimes f_{j(q+1)})\ldots(f_{j(p)}\otimes f_{j(p)})\\
&-\sum_{\substack{j:\{q+1,\ldots,p\}\to\{1,\ldots,N\}\\\ker j=\pi\backslash\{B\}}}\sum_{j(1)\in\{j(q+1),\ldots,j(p)\}}\\&
\mathrm{tr}\mathbb{E}(\|f_{j(1)}\|^{2(q-1)}f_{j(1)}\otimes f_{j(1)})\mathbb{E}(f_{j(q+1)}\otimes f_{j(q+1)})\ldots(f_{j(p)}\otimes f_{j(p)})\\
=&
\mathrm{tr}\left(\sum_{j(1)\in\{1,\ldots,N\}}\mathbb{E}\|f_{j(1)}\|^{2(q-1)}f_{j(1)}\otimes f_{j(1)}\right)\\&\left(\sum_{\substack{j:\{q+1,\ldots,p\}\to\{1,\ldots,N\}\\\ker j=\pi\backslash\{B\}}}\mathbb{E}(f_{j(q+1)}\otimes f_{j(q+1)})\ldots(f_{j(p)}\otimes f_{j(p)})\right)\\
&-\sum_{\substack{j:\{q+1,\ldots,p\}\to\{1,\ldots,N\}\\\ker j=\pi\backslash\{B\}}}\sum_{j(1)\in\{j(q+1),\ldots,j(p)\}}\\&
\mathrm{tr}\mathbb{E}(\|f_{j(1)}\|^{2(q-1)}f_{j(1)}\otimes f_{j(1)})\mathbb{E}(f_{j(q+1)}\otimes f_{j(q+1)})\ldots(f_{j(p)}\otimes f_{j(p)})\\=&
\mathrm{tr}a_{q}I\left(\sum_{\substack{j:\{q+1,\ldots,p\}\to\{1,\ldots,N\}\\\ker j=\pi\backslash\{B\}}}\mathbb{E}(f_{j(q+1)}\otimes f_{j(q+1)})\ldots(f_{j(p)}\otimes f_{j(p)})\right)\\
&+\mathrm{tr}\left(\sum_{j(1)\in\{1,\ldots,N\}}\mathbb{E}\|f_{j(1)}\|^{2(q-1)}f_{j(1)}\otimes f_{j(1)}-a_{q}I\right)\\&\left(\sum_{\substack{j:\{q+1,\ldots,p\}\to\{1,\ldots,N\}\\\ker j=\pi\backslash\{B\}}}\mathbb{E}(f_{j(q+1)}\otimes f_{j(q+1)})\ldots(f_{j(p)}\otimes f_{j(p)})\right)\\
&-\sum_{\substack{j:\{q+1,\ldots,p\}\to\{1,\ldots,N\}\\\ker j=\pi\backslash\{B\}}}\sum_{j(1)\in\{j(q+1),\ldots,j(p)\}}\\&
\mathrm{tr}\mathbb{E}(\|f_{j(1)}\|^{2(q-1)}f_{j(1)}\otimes f_{j(1)})\mathbb{E}(f_{j(q+1)}\otimes f_{j(q+1)})\ldots(f_{j(p)}\otimes f_{j(p)}).
\end{align*}
By induction hypothesis, the first term
\[\mathrm{tr}a_{q}I\left(\sum_{\substack{j:\{q+1,\ldots,p\}\to\{1,\ldots,N\}\\\ker j=\pi\backslash\{B\}}}\mathbb{E}(f_{j(q+1)}\otimes f_{j(q+1)})\ldots(f_{j(p)}\otimes f_{j(p)})\right)\]
converges to $\displaystyle a_{q}\prod_{B\in\pi\backslash\{B_{0}\}}a_{|B|}=\prod_{B\in\pi}a_{|B|}$. For the second term,
\begin{align*}
&\bigg|\mathrm{tr}\left(\sum_{j(1)\in\{1,\ldots,N\}}\mathbb{E}\|f_{j(1)}\|^{2(q-1)}f_{j(1)}\otimes f_{j(1)}-a_{q}I\right)\\&\left(\sum_{\substack{j:\{q+1,\ldots,p\}\to\{1,\ldots,N\}\\\ker j=\pi\backslash\{B\}}}\mathbb{E}(f_{j(q+1)}\otimes f_{j(q+1)})\ldots(f_{j(p)}\otimes f_{j(p)})\right)\bigg|\\\leq&
\left\|\sum_{j(1)\in\{1,\ldots,N\}}\mathbb{E}\|f_{j(1)}\|^{2(q-1)}f_{j(1)}\otimes f_{j(1)}-a_{q}I\right\|\\&
\left(\sum_{\substack{j:\{q+1,\ldots,p\}\to\{1,\ldots,N\}\\\ker j=\pi\backslash\{B\}}}\|\mathbb{E}(f_{j(q+1)}\otimes f_{j(q+1)})\ldots(f_{j(p)}\otimes f_{j(p)})\|\right)\\\leq&
\left\|\sum_{j(1)\in\{1,\ldots,N\}}\mathbb{E}\|f_{j(1)}\|^{2(q-1)}f_{j(1)}\otimes f_{j(1)}-a_{q}I\right\|\\&
\left(\sum_{\substack{j:\{q+1,\ldots,p\}\to\{1,\ldots,N\}\\\ker j=\pi\backslash\{B\}}}\frac{C_{\frac{\epsilon_{0}}{2}}}{n^{|\{j(q+1),\ldots,j(p)\}|-\frac{\epsilon_{0}}{2}}}\right)\\
&\text{ by Lemma }\ref{BoundExp2}\\\leq&
\left\|\sum_{j(1)\in\{1,\ldots,N\}}\mathbb{E}\|f_{j(1)}\|^{2(q-1)}f_{j(1)}\otimes f_{j(1)}-a_{q}I\right\|C_{\frac{\epsilon_{0}}{2}}n^{\frac{\epsilon_{0}}{2}}\to 0.
\end{align*}
For the third term,
\begin{align*}
&\bigg|\sum_{\substack{j:\{q+1,\ldots,p\}\to\{1,\ldots,N\}\\\ker j=\pi\backslash\{B\}}}\sum_{j(1)\in\{j(q+1),\ldots,j(p)\}}\\&
\mathrm{tr}\mathbb{E}(\|f_{j(1)}\|^{2(q-1)}f_{j(1)}\otimes f_{j(1)})\mathbb{E}(f_{j(q+1)}\otimes f_{j(q+1)})\ldots(f_{j(p)}\otimes f_{j(p)})\bigg|\\\leq&
\sum_{\substack{j:\{q+1,\ldots,p\}\to\{1,\ldots,N\}\\\ker j=\pi\backslash\{B\}}}\sum_{j(1)\in\{j(q+1),\ldots,j(p)\}}\\&
\|\mathbb{E}(\|f_{j(1)}\|^{2(q-1)}f_{j(1)}\otimes f_{j(1)})\|\|\mathbb{E}(f_{j(q+1)}\otimes f_{j(q+1)})\ldots(f_{j(p)}\otimes f_{j(p)})\|\\&
\sum_{\substack{j:\{q+1,\ldots,p\}\to\{1,\ldots,N\}\\\ker j=\pi\backslash\{B\}}}\sum_{j(1)\in\{j(q+1),\ldots,j(p)\}}
\frac{C_{\frac{1}{4}}}{n^{1-\frac{1}{4}}}\frac{C_{\frac{1}{4}}}{n^{|\{j(q+1),\ldots,j(p)\}|-\frac{1}{4}}}\\
&\text{ by Lemma }\ref{BoundExp2}\text{ with }\epsilon=\frac{1}{4}\\\leq&
\sum_{\substack{j:\{q+1,\ldots,p\}\to\{1,\ldots,N\}\\\ker j=\pi\backslash\{B\}}}
p\frac{C_{\frac{1}{4}}}{n^{1-\frac{1}{4}}}\frac{C_{\frac{1}{4}}}{n^{|\{j(q+1),\ldots,j(p)\}|-\frac{1}{4}}}\\=&
\sum_{\substack{j:\{q+1,\ldots,p\}\to\{1,\ldots,N\}\\\ker j=\pi\backslash\{B\}}}
\frac{C}{n^{|\{j(q+1),\ldots,j(p)\}|+\frac{1}{2}}}\leq\frac{C}{n^{\frac{1}{2}}}\to 0.
\end{align*}
\end{proof}
\begin{proof}[Proof of Theorem \ref{11}]
\begin{eqnarray*}
\mathbb{E}\circ\mathrm{tr}(f_{1}\otimes f_{1}+\ldots+f_{N}\otimes f_{N})^{p}&=&\sum_{j:\{1,\ldots,p\}\to\{1,\ldots,N\}}\mathbb{E}\circ\mathrm{tr}(f_{j(1)}\otimes f_{j(1)})\ldots(f_{j(p)}\otimes f_{j(p)})\\&=&\sum_{\substack{j:\{1,\ldots,p\}\to\{1,\ldots,N\}\\\ker j\text{ noncrossing}}}\mathbb{E}\circ\mathrm{tr}(f_{j(1)}\otimes f_{j(1)})\ldots(f_{j(p)}\otimes f_{j(p)})\\&&+
\sum_{\substack{j:\{1,\ldots,p\}\to\{1,\ldots,N\}\\\ker j\text{ crossing}}}\mathbb{E}\circ\mathrm{tr}(f_{j(1)}\otimes f_{j(1)})\ldots(f_{j(p)}\otimes f_{j(p)})
\end{eqnarray*}
The first term converges to $\displaystyle\sum_{\pi\in\mathrm{NC}(p)}\prod_{B\in\pi}a_{|B|}$ by Proposition \ref{SecondProp}. For the second term,
\begin{align*}
&\sum_{\substack{j:\{1,\ldots,p\}\to\{1,\ldots,N\}\\\ker j\text{ crossing}}}\mathbb{E}\circ\mathrm{tr}(f_{j(1)}\otimes f_{j(1)})\ldots(f_{j(p)}\otimes f_{j(p)})\\\leq&\sum_{\substack{j:\{1,\ldots,p\}\to\{1,\ldots,N\}\\\ker j\text{ crossing}}}\frac{C_{\frac{1}{2}}}{n^{|\{j(1),\ldots,j(p)\}|+1-\frac{1}{2}}}\text{ by Proposition }\ref{FirstProp}\text{ with }\epsilon=\frac{1}{2}\\\leq&\frac{C}{n^{\frac{1}{2}}}\to 0.
\end{align*}
\end{proof}

\end{document}